\documentclass[11pt,a4paper,reqno]{amsart}

\parskip.20cm
\usepackage{setspace}
\usepackage{footnote}
\usepackage{graphicx}
\usepackage{cmap,mathtools}
\usepackage[T1]{fontenc}
\usepackage[utf8]{inputenc}
\usepackage[english]{babel}
\usepackage{amsfonts,amssymb,amsthm,amsmath,enumerate,bm, cite,mathrsfs, soul}
\usepackage{url}
\usepackage[shortlabels]{enumitem}
\usepackage{float}
\usepackage{secdot}

\allowdisplaybreaks

\usepackage{graphicx}
\usepackage{epstopdf}
\epstopdfsetup{outdir=./}

\usepackage{a4wide}
\setlength{\parskip}{0.3em}
\textwidth6.25in \textheight9in
\usepackage{xparse}
\usepackage[dvipsnames]{xcolor}
\usepackage[normalem]{ulem}

\usepackage[colorlinks=true,linktocpage,pdfpagelabels,bookmarksnumbered,bookmarksopen]{hyperref}
\definecolor{ForestGreen}{rgb}{0.1,0.7,0}
\definecolor{EgyptBlue}{rgb}{0.063,0.1,0.6}
\hypersetup{
	colorlinks=true,
	linkcolor=blue,         
	citecolor=ForestGreen,
	urlcolor=ForestGreen
}

\usepackage[hyperpageref]{backref} 

\usepackage[margin=1.1in]{geometry}

\newcommand\blfootnote[1]{%
  \begingroup
  \renewcommand\thefootnote{}\footnote{#1}%
  \addtocounter{footnote}{-1}%
  \endgroup
}

\newtheorem{theorem}{Theorem}
\newtheorem{proposition}[theorem]{Proposition}
\newtheorem{lemma}[theorem]{Lemma}
\newtheorem{corollary}[theorem]{Corollary}
\theoremstyle{definition}

\newtheorem{remark}[theorem]{Remark}

\let\OLDthebibliography\thebibliography
\renewcommand\thebibliography[1]{
	\OLDthebibliography{#1}
	\setlength{\parskip}{1pt}
	\setlength{\itemsep}{1pt plus 0.3ex}
}

\numberwithin{theorem}{section}

\numberwithin{theorem}{section}

\DeclarePairedDelimiter\norm{\lVert}{\rVert}%
\makeatletter
\let\oldnorm\norm
\def\norm{\@ifstar{\oldnorm}{\oldnorm*}}

\newcommand{\al} {\alpha}

\newcommand{\pa} {\partial}
\newcommand{\be} {\beta}
\newcommand{\de} {\delta}
\newcommand{\De} {\Delta}

\newcommand{\ga} {\gamma}
\newcommand{\Ga} {\Gamma}
\newcommand{\ta}{\tau}
\newcommand{\om} {\omega}
\newcommand{\Om} {\Omega}

\newcommand{\la} {\lambda}

\newcommand{\si} {\sigma}

\newcommand{\no} {\nonumber}
\newcommand{\noi} {\noindent}

\newcommand{\cdast}{\circledast}

\newcommand{\RN}{{\mathbb{R}^n}}

\newcommand{\da}{{\,\rm d}\al}
\newcommand{\dd}{{\,\rm d}\de}
\newcommand{\dsi}{{\,\rm d}\si}

\newcommand\restr[2]{{
  \left.\kern-\nulldelimiterspace 
  #1 
  \right|_{#2} 
  }}

\def\C{{\mathcal C}}

\def\R{{\mathbb R}}

\def\({{\Big(}}
\def\){{\Big)}}

\def\c1{{\C_c^1}}

\def\d{{\rm d}}

\def\dt{{\rm d}t}

\def\dx{{\rm d}x}

\def\drho{{\rm d}\rho}

\def\Omo{{\Omega_{\text{out}}}}
\def\Omi{{\Omega_{\text{in}}}}

\setstcolor{red}

\usepackage[foot]{amsaddr}
\usepackage[pagewise]{lineno}

\makeatletter
\@namedef{subjclassname@2020}{\textup{2020} Mathematics Subject Classification}
\makeatother


\makeatother

\date{}
\begin{document}
\setstretch{1.1} 

\title[Reverse Faber-Krahn inequalities for Zaremba problems]{Reverse Faber-Krahn inequalities for Zaremba problems} 

\author[T. V. Anoop and Mrityunjoy Ghosh]{T. V. Anoop$^{1,*}$ and Mrityunjoy Ghosh$^{1}$}
\blfootnote{$^{*}$Corresponding author}
\address{$^{1}$Department of Mathematics, Indian Institute of Technology Madras, Chennai 600036, India}

\email{anoop@iitm.ac.in}

\email{ghoshmrityunjoy22@gmail.com}



\subjclass[2020]{35P15, 35P30, 49R05, 49Q10}
\keywords{Zaremba problems, $p$-Laplacian, Reverse Faber-Krahn inequality, Quermassintegrals, Steiner Formula, Nagy's inequality, Method of interior parallel sets.}

\begin{abstract}
    Let $\Omega$ be a multiply-connected domain in $\mathbb{R}^n$ ($n\geq 2$) of the form $\Omega=\Omega_{\text{out}}\setminus \overline{\Omega_{\text{in}}}.$ Set $\Omega_D$ to be  either $\Omega_{\text{out}}$ or $\Omega_{\text{in}}$.  For $p\in (1,\infty),$ and $q\in [1,p],$ let $\tau_{1,q}(\Omega)$ be the first eigenvalue of
    \begin{equation*}
        -\Delta_p u =\tau \left(\int_{\Omega}|u|^q \text{d}x \right)^{\frac{p-q}{q}} |u|^{q-2}u\;\text{in} \;\Omega,\;
	u =0\;\text{on}\;\partial\Omega_D,
\frac{\partial u}{\partial \eta}=0\;\text{on}\; \partial \Omega\setminus \partial \Omega_D.
    \end{equation*}
    Under the assumption that $\Omega_D$ is convex, we establish the following reverse Faber-Krahn inequality
    $$\tau_{1,q}(\Omega)\leq \tau_{1,q}({\Omega}^\bigstar),$$
 where ${\Omega}^\bigstar=B_R\setminus \overline{B_r}$ is a concentric annular region in $\mathbb{R}^n$ having the same Lebesgue measure as $\Omega$ and such that \begin{enumerate}[(i)]
     \item (when $\Omega_D=\Omega_{\text{out}}$) $W_1(\Omega_D)= \omega_n R^{n-1}$, and  $(\Omega^\bigstar)_D=B_R$,
     \item (when $\Omega_D=\Omega_{\text{in}}$) $W_{n-1}(\Omega_D)=\omega_nr$, and 
     $(\Omega^\bigstar)_D=B_r$.
      \end{enumerate} 
    Here $W_{i}(\Omega_D)$ is the $i^{\text{th}}$ {\it quermassintegral} of $\Omega_D.$  We also establish Sz. Nagy's type inequalities for parallel sets of a convex domain in $\mathbb{R}^n$ ($n\geq 3$) for our proof.
\end{abstract}


\maketitle
\section{Introduction and statements of the main results}\label{intro}
In the book \textit{The Theory of Sound} \cite{Rayleigh}, Lord Rayleigh conjectured that - ``\textit{among all the planar domains with the fixed area, disk minimizes the first Dirichlet eigenvalue of the Laplacian}.'' To state this result mathematically, consider the first eigenvalue  $\la_1(\Om)$ of the following eigenvalue problem on a bounded domain $\Om\subset \mathbb{R}^n$:
\begin{equation}\label{Dirichlet}
	-\Delta u =\la u \quad\text{in} \quad\Omega,\;\;
	u =0 \quad\text{on}\quad \pa\Om.
\end{equation}
Let $\Om^*$  be the open ball centered at the origin with the same volume as $\Om$.
Then the Rayleigh's conjecture reads as: for $n=2,$
\begin{equation*}
    \la_1(\Om^*)\leq \la_1(\Om).
\end{equation*}
 Rayleigh's conjecture  was proved independently by Faber \cite{Faber} and Krahn \cite{Krahn1925}. Later Krahn extended the result for $n> 2$ in \cite{Krahn1926}. The proof given by Faber is based on discretization  and  approximation, whereas Krahn's proof \cite{Krahn1925,Krahn1926} is based on the classical  isoperimetric inequality and the Coarea formula. In \cite{Daners2007}, it is shown that the equality in Rayleigh-Faber-Krahn inequality  holds only if $\Om$ is a ball up to a set of  capacity zero.
 
 Now let us consider the first non-trivial Neumann eigenvalue $\mu_2(\Om)$ of the following Neumann eigenvalue problem on a bounded Lipschitz domain $\Om\subset \mathbb{R}^n$:
\begin{equation}\label{Neumann}
	-\Delta u =\mu u \quad\text{in} \quad\Omega,\;\;
	\frac{\partial u}{\partial \eta} =0 \quad\text{on}\quad \pa\Om.
\end{equation}
where $\eta$ is the unit outward normal to  $\partial \Om$. For $n=2$, Kornhauser and Stakgold \cite{Stakgold} have shown that 
\begin{equation}\label{Szego}
    \mu_2(\Om)\leq \mu_2(\Om^*),
\end{equation}
provided $\Om$ is obtained by a small area-preserving perturbation of  a disk. Further, they have established that a maximizing domain (if exists) for $\mu_2$, in the class of simply-connected planar domains with the fixed area, must be  a disk. Later, Szeg\"{o} \cite{Szego} proved \eqref{Szego} for  a simply-connected planar  domain $\Om$ which is bounded by an analytic curve. In 1956, Weinberger \cite{Weinberger1956} extended \eqref{Szego} for general bounded Lipschitz domain in $\R^n$ without the simply connectedness assumption. The inequality \eqref{Szego} is known as the Szeg\"{o}-Weinberger inequality in the literature. The equality in Szeg\"{o}-Weinberger inequality holds if and only if $\Om$ is a ball up to a set of Lebesgue measure zero; see, for instance, \cite[Theorem 7.1.1]{Henrot2006}.

For $1 < p < \infty,$ we study the similar inequalities  for the first eigenvalue of the Zaremba problems (mixed boundary conditions) for the $p$-Laplace operator $\De_p,$ defined by $\Delta_p u\coloneqq \text{div}(|\nabla u|^{p-2}\nabla u)$, on multiply-connected domains. More precisely,  we consider domains of the following form: 
\begin{equation}\label{Domain}
 \left\{\begin{aligned}
      \Om&=\Omo\setminus \overline{\Omi},\; \Om\;\text{is Lipschitz}\\ &\text{and}\;
  \Omo, \Omi\;\text{are open sets in}\;\R^n\;
  \text{ such that}\;\overline{\Omi}\subset\Omo.
\end{aligned}\right.
\end{equation}
 Let $\Om_D$ be either $\Omo$ or $\Omi$ and $\Ga_D:=\pa \Om_D$. Now for $1\leq q\leq p$, consider the following Zaremba eigenvalue problem for the $p$-Laplacian 
on $\Om$:
\begin{equation}\tag{P}\label{Problem}
	\left. \begin{aligned}
	-\Delta_p u &=\ta \left(\int_{\Om}|u|^q \dx \right)^{\frac{p-q}{q}} |u|^{q-2}u \quad\text{in} \quad\Omega,\\
	u &=0 \qquad\qquad \quad\; \qquad \qquad \qquad\;\text{on}\quad \Ga_D,\\
\frac{\partial u}{\partial \eta} &=0 \qquad\qquad \quad\; \qquad \qquad \qquad\;\text{on} \quad \pa \Om\setminus\Ga_D,
	\end{aligned}\right\}
\end{equation}
where $ \ta\in\R $ and $\eta$ is the unit outward normal to the boundary of $\Om$. For $q=p$, \eqref{Problem} coincides with the eigenvalue  problem  for the $p$-Laplace operator. Indeed, \eqref{Problem} admits a least positive eigenvalue $\ta_{1,q}(\Om)$ (see Proposition \ref{Existence}) and it has the following variational characterization:
\begin{equation}\label{variational_lambda}
	\ta_{1,q}(\Omega)=\displaystyle\inf\Big\{\mathcal{R}_q(u): u\in W^{1,p}_{\Ga_D}(\Omega)\setminus\{0\} \Big\},
\end{equation}
where $\mathcal{R}_q(u):=\frac{\int_{\Omega}|\nabla u|^p\dx}{\left(\int_{\Omega}|u|^q\dx\right)^\frac{p}{q}}$ and $W^{1,p}_{\Ga_D}(\Om)$ is the space of all functions in $W^{1,p}(\Om)$ that vanishes on $\Ga_D$. In fact, $\frac{1}{\ta_{1,q}(\Om)}$ is the best constant of the Sobolev embedding $W^{1,p}_{\Ga_D}(\Om)\hookrightarrow L^q(\Om),$ for $q\in [1,p].$ 

The shape optimization problems of such nonlinear eigenvalue are considered by Bucur and Giacomini \cite{Bucur2015Giacomini} for the first Robin eigenvalue of the Laplacian. They have established a family of Faber-Krahn type inequalities (for $1\leq q\leq \frac{2n}{n-2}$) for the first eigenvalue of the Robin Laplacian. In \cite{Bobkov2020Kolonitskii}, Bobkov and Kolonitskii studied a monotonicity result (with respect to domain perturbation) for the first Dirichlet eigenvalue of the $p$-Laplacian with suitable nonlinearity; see \cite{anoop2018,AnisaMrityunjoy} also for a similar result when $q=p$. The symmetries of the minimizers of $\mathcal{R}_q$ subject to different boundary conditions can be found in \cite{Nazarov2000,Nazarov2001, Kawohl}. We also refer to the monographs \cite{Henrot2006,henrot2021} for further readings in this direction.

In this article, we consider the following two cases:
\begin{center}
\begin{minipage}{.5\textwidth}
    \begin{enumerate}[(i)]
\item \textit{Outer Dirichlet problem}: $ \Om_D=\Omo$.
    \item \textit{Inner Dirichlet problem}: $ \Om_D=\Omi$. 
\end{enumerate}
\end{minipage}
    \end{center}
 For a Lebesgue measurable set  $A$ in $\R^n,$ $|A|$ denotes the Lebesgue measure of $A$ and $P(A)$ denotes the perimeter ($(n-1)$-dimensional Hausdorff measure of $\pa A$) of $A$. For $a>0$, $B_a$ denotes the open ball of radius $a$ centered at the origin. For each of the outer and inner Dirichlet problems, we choose $0<r<R$ (whence two concentric annular regions) as follows:
\begin{enumerate}[label=(\textbf{A\arabic*})]
    \item \label{AO}\textit{For outer Dirichlet problem}:
   $$A_O(\Om)=B_R\setminus \overline{B_r} \text{ such that } |\Om|=|A_O(\Om)| \text{ and }  P(\Om_D)=P(B_R),$$
    
    \item \label{AI}\textit{For inner Dirichlet problem}:
    $$A_I(\Om)=B_R\setminus \overline{B_r} \text{ such that } |\Om|=|A_I(\Om)| \text{ and }  P(\Om_D)=P(B_r).$$
\end{enumerate}

\subsection{Outer Dirichlet problem \normalfont{($ \Om_D=\Omo$)}}
 In \cite{Payne}, for $n=2$, $p=q=2$, Payne and Weinberger established that
\begin{equation}\label{Payne_rfk}
    \ta_{1,2}(\Om)\leq \ta_{1,2}(A_O(\Om)).
\end{equation}
  In particular, by setting $\Omi=\emptyset,$ it is easy to see that over the class of domains with  fixed area and perimeter, $\la_1(\Om)$ (first Dirichlet eigenvalue) is bounded above by $\tau_{1,2}(A_O(\Om)).$ This result sharpens the various upper bounds for $\la_1(\Om)$ obtained earlier in \cite{Polya,Makai}. In \cite{Hersch}, Hersch provided a different proof of \eqref{Payne_rfk}. Both Hersch and Payne-Weinberger used Sz. Nagy's inequality \cite{Nagy} for inner parallels sets. Now we recall the inner parallel set of a domain and the Sz. Nagy's inequality for the inner parallel set. For a non-empty set $A\subset \R^n$, we define $d(x, A)=\inf\{d(x,y): y\in A\},$ where $d$ is the euclidean distance function.
 \subsubsection{Inner parallel set:}\label{Inner_parallel} Let $\Om\subset \mathbb{R}^n$ be a bounded domain and $\Ga\subset \pa\Om$. For $\de>0$, consider the set $$\Om_{-\de}:=\{x\in \Om:d(x,\Ga)\geq\de\}.$$ The set $\pa\Om_{-\de}$ is known as the \textit{inner parallel set} with respect to $\Ga$ at a distance $\de$. For $n=2$, $\Om$ simply connected and $\Ga=\pa \Om$, Sz. Nagy \cite{Nagy} proved that
  \begin{equation*}
    P(\Om_{-\de})\leq  P( \Om)-2\pi\de, \quad \forall\, \de>0.
\end{equation*}
  For $\Om$, we define $\Om^\#$ in the following way:
  \begin{equation}\label{Ball_perimeter}
 \begin{aligned}
      \Om^\#\;\text{is the open ball centered at the origin such that}\;P(\Om^\#)=P(\Om).
\end{aligned}
\end{equation}
  For $n=2$, it is easy to observe that $P(\Om^\#_{-\de})=P(\Om)-2\pi\de$ and hence Sz. Nagy's inequality can be restated as
\begin{equation}\label{Nagy_in}
    P(\Om_{-\de})\leq P(\Om^\#_{-\de}).
\end{equation}
 In \cite{Anoop2020}, for $n\geq 3$ and $\Om$ as given in \eqref{Domain}, Anoop and Ashok observed that the equation \eqref{Nagy_in} holds for $\Ga=\Ga_D$ when $\Om_D$ is a ball in $\mathbb{R}^n$. Using \eqref{Nagy_in}, for $q=p$, they extended reverse Faber-Krahn inequality \eqref{Payne_rfk} for the first eigenvalue $\ta_{1,q}(\Om)$ of \eqref{Problem} to higher dimensions under the assumption that  $\Om_D$ is a ball; cf \cite[Theorem 1.1]{Anoop2020}. They named \eqref{Payne_rfk} as the \textit{reverse Faber-Krahn inequality}. In \cite{Paoli}, the authors recently extended this result (for $q=p$) for $\Om$ with $\Om_D$ as a convex domain. Their proof is based on constructing a web function using the first eigenfunction of \eqref{Problem} on  $A_O(\Om)$. In this article, we establish this result for $\ta_{1,q}$ using a different method explicitly based on the Sz. Nagy's type inequality for inner parallel sets in higher dimensions. Furthermore, we prove that the concentric annulus is the unique maximizer (up to a translation) for $\ta_{1,q}$ in the admissible class of Lipschitz domains. First, we establish \eqref{Nagy_in} for convex domains in higher dimensions using the Alexandrov-Fenchel inequality (Section \ref{ReverseNagy}).
 
 \begin{proposition}\label{Nagy_in_extension}
Let $\Om\subset\mathbb{R}^n$ be a bounded, convex domain and $r_\Om$ be the inradius of $\Om$. Let $\Om^\#$ be as defined in \eqref{Ball_perimeter}. Then
\begin{enumerate}[(i)]
    \item $P(\Om_{-\de})\leq P(\Om^\#_{-\de}),\;\text{for all}\;\de\in (0,r_\Om),$
    
    \item for $n\geq 3,$ the equality holds in the above inequality for some $\de$ if and only if $\Om$ is a ball.
\end{enumerate}
\end{proposition}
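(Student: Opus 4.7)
The plan is to reduce the perimeter inequality to a scalar comparison via the effective radius
\[
R(\de):=\left(\frac{W_1(\Om_{-\de})}{\om_n}\right)^{1/(n-1)},
\]
so that $P(\Om_{-\de})=n\om_n R(\de)^{n-1}$ while $P(\Om^{\#}_{-\de})=n\om_n(R-\de)^{n-1}$, where $R:=R(0)$ is the radius of $\Om^{\#}$. Part (i) is thus equivalent to $R(\de)\le R-\de$ for all $\de\in(0,r_\Om)$. The analytic heart of the argument is the quermassintegral isoperimetric-type inequality
\[
\frac{W_2(K)}{\om_n}\ \ge\ \left(\frac{W_1(K)}{\om_n}\right)^{(n-2)/(n-1)}
\]
for every convex body $K\subset\Rn$, with equality (for $n\ge 3$) iff $K$ is a ball. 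This follows from the Alexandrov--Fenchel inequalities $W_{i+1}(K)^{2}\ge W_i(K)W_{i+2}(K)$, which render the sequence $i\mapsto \log(W_i(K)/\om_n)$ concave; combined with the normalisation $W_n(K)=\om_n$, concavity of this finite sequence yields the stated bound, and the equality case collapses down the A--F chain to the classical isoperimetric equality.

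Next I would derive the pointwise estimate $D^{+}R(\de)\le -1$, where $D^{+}$ denotes the upper right Dini derivative. A direct verification using convexity of $\Om$ shows $\Om_{-\de-h}+hB\subseteq \Om_{-\de}$ for $h>0$ small; applying the outer Steiner formula to $\Om_{-\de-h}+hB$ together with monotonicity of $W_1$ gives
\[
W_1(\Om_{-\de-h})+(n-1)h\,W_2(\Om_{-\de-h})+O(h^{2})\ \le\ W_1(\Om_{-\de}).
\]
Passing to the limit $h\to 0^{+}$ and using continuity of $W_2$ under Hausdorff convergence of convex bodies yields $-D^{+}W_1(\Om_{-\de})\ge (n-1)W_2(\Om_{-\de})$. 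Differentiating the identity $R(\de)^{n-1}=W_1(\Om_{-\de})/\om_n$ and substituting the quermassintegral bound above applied to $\Om_{-\de}$, we obtain $D^{+}R(\de)\le -1$; since $R$ is continuous and monotone, a standard differential comparison argument concludes $R(\de)\le R-\de$, which is (i).

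For (ii), if equality holds at some $\de_0\in(0,r_\Om)$, then $R(\de)=R-\de$ on $[0,\de_0]$, forcing $D^{+}R(\de)=-1$ throughout this interval and hence equality in the quermassintegral bound applied to $\Om_{-\de}$ for a.e. $\de\in(0,\de_0)$. The equality characterisation from the first paragraph (available precisely for $n\ge 3$) then identifies $\Om_{-\de}$ with a ball for a.e. such $\de$, and Hausdorff continuity of $\de\mapsto \Om_{-\de}$ extends this back to $\Om$ itself. The main technical obstacle is making the Dini-derivative step fully rigorous without any smoothness hypothesis on $\pa\Om$; it rests on the standard facts that $\de\mapsto \Om_{-\de}$ is continuous in Hausdorff distance and that the quermassintegral functionals on convex bodies are continuous with respect to Hausdorff convergence.
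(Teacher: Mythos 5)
Your proposal is correct and follows essentially the same route as the paper: the differential inequality $-\frac{d}{dt}P(\Om_{-t})\ge n(n-1)W_2(\Om_{-t})$ (which the paper imports as Lemma \ref{Derivative_lemma} from Brandolini et al., and which you reprove directly via the inclusion $\Om_{-\de-h}+hB\subseteq\Om_{-\de}$, the Steiner formula and monotonicity of $W_1$), combined with the Alexandrov--Fenchel inequality for $(i,j)=(1,2)$ and an integration/comparison showing that $P(\Om_{-\de})^{1/(n-1)}$ decays at least as fast as the linear decay of the ball. Your Dini-derivative formulation and the Hausdorff limit $\de\to 0$ in the equality case are minor (arguably cleaner) repackagings of the paper's a.e.-derivative integration and of its use of the identity $\Om=(\Om_{-t})_t$.
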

Next, as an application of Proposition \ref{Nagy_in_extension}, we extend \eqref{Payne_rfk} for the  nonlinear eigenvalues $\ta_{1,q}(\Om)$ of \eqref{Problem} provided that $\Om_D$ is convex.
\begin{theorem}
\label{Theorem_Out}
Let $\Om$ and $A_O(\Om)$ be as given in \eqref{Domain} and \ref{AO}, respectively, with $\Om_D=\normalfont{\Omo}$. For $q\in[1,p]$, let $\ta_{1,q}(\Om)$ be the first eigenvalue of \eqref{Problem} on $\Om$. If $\Om_D$ is convex, then
\begin{equation*}\label{RFK_out}
    \ta_{1,q}(\Om)\leq \ta_{1,q}(A_O(\Om)).
\end{equation*}
Furthermore, for $n\geq 3,$ the equality holds if and only if $\Om=A_O(\Om)$ (up to a translation).
\end{theorem}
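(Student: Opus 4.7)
The plan is to construct a test function on $\Om$ by transplanting the first eigenfunction of $A_O(\Om)$ along the distance-to-$\Ga_D$ function, and then compare Rayleigh quotients via Proposition \ref{Nagy_in_extension} in concert with the coarea formula. Writing $A_O(\Om)=B_R\setminus\overline{B_r}$, we let $u^\ast(x)=\phi(R-|x|)$ be a radial first eigenfunction on $A_O(\Om)$, so $\phi\colon[0,R-r]\to[0,M]$ is smooth and non-decreasing with $\phi(0)=0$, $\phi'(R-r)=0$ and $M=\|u^\ast\|_\infty$. Extend $\phi$ by the constant $M$ on $[R-r,\infty)$, let $\de(x):=d(x,\Ga_D)$ on $\Om$, and take $v(x):=\phi(\de(x))$. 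Since $\de$ is $1$-Lipschitz and $\phi(0)=0$, we have $v\in W^{1,p}_{\Ga_D}(\Om)$, and the goal is to prove $\mathcal{R}_q(v)\leq\mathcal{R}_q(u^\ast)=\ta_{1,q}(A_O(\Om))$.

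By the coarea formula and $|\Gr\de|=1$ a.e., writing $f(t):=\mathcal{H}^{n-1}(\{\de=t\}\cap\Om)$, one has $\int_\Om|\Gr v|^p\dx=\int_0^\infty(\phi'(t))^p f(t)\dt$ and $\int_\Om|v|^q\dx=\int_0^\infty\phi(t)^q f(t)\dt$. The convexity of $\Om_D$ ensures that $(\Om_D)_{-t}$ is convex and $\{\de=t\}\cap\Om\subseteq\pa(\Om_D)_{-t}\setminus\overline{\Omi}$, giving $f(t)\leq P((\Om_D)_{-t})$; and since the calibration $P(\Om_D)=P(B_R)$ identifies $B_R$ as the $\#$-ball of $\Om_D$, Proposition \ref{Nagy_in_extension} yields $P((\Om_D)_{-t})\leq P(B_{R-t})=:f^\ast(t)$ for $t\in(0,r_{\Om_D})$. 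Because $\phi'$ is supported in $[0,R-r]$, we obtain $\int_\Om|\Gr v|^p\dx\leq\int_0^{R-r}(\phi'(t))^p f^\ast(t)\dt=\int_{A_O(\Om)}|\Gr u^\ast|^p\dx$.

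For the $L^q$ comparison we use the volume identity $|\Om|=|A_O(\Om)|$, which reads $\int_0^{\de_{\max}}f(t)\dt=\int_0^{R-r}f^\ast(t)\dt$. Combined with $0\leq\phi(t)\leq M$ and $f^\ast(t)\geq f(t)$ on $[0,R-r]$, this gives
\[
\int_0^{R-r}\phi(t)^q\bigl(f^\ast(t)-f(t)\bigr)\dt\leq M^q\int_0^{R-r}\bigl(f^\ast(t)-f(t)\bigr)\dt=M^q\int_{R-r}^{\de_{\max}}f(t)\dt,
\]
which rearranges (using $\phi\equiv M$ on $[R-r,\de_{\max}]$) into $\int_\Om|v|^q\dx\geq\int_{A_O(\Om)}|u^\ast|^q\dx$. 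Substituting both bounds into the variational characterization \eqref{variational_lambda} delivers $\ta_{1,q}(\Om)\leq\mathcal{R}_q(v)\leq\ta_{1,q}(A_O(\Om))$.

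For the rigidity assertion when $n\geq 3$, equality throughout the chain forces $P((\Om_D)_{-t})=P(B_{R-t})$ on a set of positive measure in $(0,R-r)$; Proposition \ref{Nagy_in_extension}(ii) then gives that $\Om_D$ is a ball, necessarily $B_R$ up to translation. After centering at the origin, the equality $f=f^\ast$ on $(0,R-r)$ forces $\overline{\Omi}\cap\pa B_{R-t}$ to be $\mathcal{H}^{n-1}$-null for a.e.\ $t$, so $\overline{\Omi}\subseteq\overline{B_r}$; the measure identity $|\Omi|=|B_r|$ then gives $\Omi=B_r$ and $\Om=A_O(\Om)$. The main obstacle is the double role of Proposition \ref{Nagy_in_extension}: it applies only to the convex piece $\Om_D$ and not to $\Om$ itself, so the reduction $f(t)\leq P((\Om_D)_{-t})$ through excision of $\overline{\Omi}$ must be justified carefully; moreover, the $L^q$ step---the new ingredient for $q<p$ compared with \cite{Anoop2020,Paoli}---must accommodate the a priori possibility $\de_{\max}>R-r$, which is exactly what the constant extension of $\phi$ and the volume-deficit identity above are designed to absorb.
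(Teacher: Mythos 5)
Your proof is correct, and it reaches the result by a genuinely different, though closely related, route. The paper also reduces everything to the level-set data $s(\de)=P(\Om_{D_{-\de}}\cap\Om)\le P(\Om_{D_{-\de}})\le P(B_{R-\de})=S(\de)$ supplied by Proposition \ref{Nagy_in_extension} (with $B_R=\Om_D^{\#}$ because of the calibration \ref{AO}), but its test function is $\psi_q(v(\rho_2(x)))$ with $\psi_q=\phi_q\circ V^{-1}$: the annulus profile is transplanted only after reparametrizing both domains by enclosed volume $\al=v(\de)$, $\al=V(\de)$. That choice makes the two $L^q$ norms \emph{exactly} equal, $\int_\Om|u_q|^q\dx=\int_0^{|\Om|}|\psi_q(\al)|^q\da=\int_{A_O(\Om)}|w_q|^q\dx$, so the whole comparison is carried by the gradient term via $h(\al)\le H(\al)$, and no monotonicity of the profile is needed for the denominator. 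You instead transplant the raw web function $\phi(\de(x))$ (constant past $R-r$), which trivializes the gradient comparison through the pointwise bound $f\le f^{*}$ but leaves only an inequality in the $L^q$ term; you close that gap using $0\le\phi\le M$, $\phi\equiv M$ on $[R-r,\de_{\max}]$, and the volume-deficit identity, which genuinely requires the strict radial monotonicity and the Neumann condition $\phi'(R-r)=0$ from Proposition \ref{Radiality}. In effect your argument fuses the web-function strategy of \cite{Paoli} with the Nagy-type inequality, whereas the paper's reparametrization avoids the monotonicity input for the outer problem. Two small points should be made explicit but are easily supplied: the identity $\int_0^{R-r}(f^{*}-f)\dt=\int_{R-r}^{\de_{\max}}f\dt$ presupposes $\de_{\max}\ge R-r$, which follows from $f\le f^{*}$ together with $|\Om|=|A_O(\Om)|$ (this is precisely Lemma \ref{aux2}$(i)$); and passing from $|\Omi\,\triangle\, B_r|=0$ to $\Omi=B_r$ in the rigidity step uses the Lipschitz regularity of $\Om$, exactly as in the paper's equality analysis. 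With those two remarks your argument is complete and yields the same rigidity statement for $n\ge 3$.
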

\subsection{Inner Dirichlet problem \normalfont{($\Om_D=\Omi$)}}
In \cite{Hersch}, Hersch  consider \eqref{Problem}  for the case $\Om_D=\Omi.$ For $n=2$ and $p=q=2$, Hersch established the following reverse Faber-Krahn inequality: 
 \begin{equation}\label{Hersch_rfk} 
     \ta_{1,2}(\Om)\leq \ta_{1,2}(A_I(\Om)),
 \end{equation}  
where $A_I(\Om)$ is as defined in \ref{AI}. The key step in proving \eqref{Hersch_rfk} is Sz. Nagy's inequality \cite{Nagy} for outer parallels for a bounded planar domain $\Om$. Let us define the outer parallel sets and describe the Sz. Nagy's inequality for them.
 \subsubsection{Outer parallel set:}\label{Outer_parallel} For a bounded domain $\Om\subset \mathbb{R}^n$, the \textit{outer parallel body} $\Om_{\de}$ of $\Om$  with respect to $\Ga\subset \pa\Om$  at a distance $\de>0$ is defined by
 \begin{equation}\label{Outer_p}
     \Om_{\de}:=\{ x\in \Om^c: d(x,\Ga)\geq \de \}^c.
 \end{equation} The boundary $\pa\Om_{\de}$ is called as the \textit{outer parallel set} at a distance $\de$. For $n=2$, $\Om$ simply connected and $\Ga=\pa\Om$, Sz. Nagy's inequality \cite{Nagy} for outer parallel sets states that
\begin{equation}\label{Nagy_out}
    P(\Om_{\de})\leq P(\Om^\#_{\de}),
\end{equation}
where $\Om^\#$ is as given in \eqref{Ball_perimeter}. 
\begin{remark}
Notice that, for a convex planar domain, Steiner formula gives the equality in  \eqref{Nagy_out}; see \cite{Steiner} or \cite[Theorem 10.1]{Gray}.
\end{remark}
In \cite{Anoop2020}, for $n\geq 3$ and $\Om$ as given in \eqref{Domain}, Anoop and Ashok observed that the inequality \eqref{Nagy_out} holds for $\Ga=\Ga_D$ when $\Om_D$ is a ball in $\mathbb{R}^n$. As a consequence, they obtained \eqref{Hersch_rfk} for the first eigenvalue $\ta_{1,q}(\Om)$ of \eqref{Problem} with $q=p$ in higher dimensions under the assumption that $\Om_D$ is a ball; cf. \cite[Theorem 1.2]{Anoop2020}. We observed that the inequality \eqref{Nagy_out} with $\Ga=\pa\Om$ fails in higher dimensions, even  for a general convex domain. More precisely, we establish the following reverse type of Sz. Nagy's inequality for convex sets in higher dimensions.
\begin{theorem}\label{nag}
 Let $\Om\subset\mathbb{R}^n\;(n\geq 3)$ be a bounded, convex domain and $\de>0$. Let $\Om^\#$ be as defined in \eqref{Ball_perimeter}. Then
 \begin{equation*}\label{Nagy_reverse}
     P(\Om_{\de})\geq P(\Om^\#_{\de}).
 \end{equation*} 
  Furthermore, equality holds for some $\de$ if and only if $\Om$ is a ball.
 \end{theorem}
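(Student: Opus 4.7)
The plan is to reduce the inequality to a termwise comparison of two explicit polynomials in $\de$, using Steiner's formula for the outer parallel body and then invoking the Alexandrov--Fenchel inequalities between the quermassintegrals of $\Om$.

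First, since $\Om$ is convex, $\Om_\de=\Om+\de\,\overline{B_1}$ is also convex, and Steiner's formula gives $|\Om_\de|=\sum_{i=0}^n\binom{n}{i}W_i(\Om)\,\de^i$. Differentiating in $\de$ (valid for convex parallel bodies since $P(\Om_\de)=\frac{d}{d\de}|\Om_\de|$) yields
\begin{equation*}
P(\Om_\de)=n\sum_{i=0}^{n-1}\binom{n-1}{i}W_{i+1}(\Om)\,\de^i.
\end{equation*}
On the other hand, writing $\Om^\#=B_\rho$ with $\rho:=(W_1(\Om)/\omega_n)^{1/(n-1)}$ and using $W_i(B_\rho)=\omega_n\rho^{n-i}$, the corresponding expansion for the ball reads
\begin{equation*}
P(\Om^\#_\de)=n\omega_n(\rho+\de)^{n-1}=n\sum_{i=0}^{n-1}\binom{n-1}{i}\omega_n\rho^{n-1-i}\,\de^i.
\end{equation*}

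Next, I would establish the coefficient-wise bound $W_{i+1}(\Om)\geq\omega_n\rho^{n-1-i}$ for every $0\leq i\leq n-1$. The case $i=n-1$ is the trivial identity $W_n(\Om)=\omega_n$, and for $0\leq i\leq n-2$ the bound is equivalent to
\begin{equation*}
\left(\frac{W_{i+1}(\Om)}{\omega_n}\right)^{1/(n-1-i)}\geq \left(\frac{W_1(\Om)}{\omega_n}\right)^{1/(n-1)},
\end{equation*}
which is the standard monotonicity of the sequence $k\mapsto (W_k(\Om)/\omega_n)^{1/(n-k)}$, obtained by iterating the Alexandrov--Fenchel log-concavity $W_k(\Om)^2\geq W_{k-1}(\Om) W_{k+1}(\Om)$ together with the endpoint $W_n(\Om)=\omega_n$. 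Summing the termwise estimates weighted by $n\binom{n-1}{i}\de^i$ then produces $P(\Om_\de)\geq P(\Om^\#_\de)$.

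For the equality case with $n\geq 3$, observe that $P(\Om_\de)-P(\Om^\#_\de)$ is a polynomial in $\de$ with nonnegative coefficients; if it vanishes at some $\de>0$, then every coefficient must vanish. Choosing the $\de^1$-coefficient (available since $n\geq 3$) forces $W_2(\Om)=\omega_n\rho^{n-2}$, which combined with $W_1(\Om)=\omega_n\rho^{n-1}$ saturates the classical Minkowski inequality $W_2(\Om)^{n-1}\geq\omega_n W_1(\Om)^{n-2}$, whose equality case is known to characterize balls. The main technical point of the argument is to isolate the correct sharp form of the Alexandrov--Fenchel/Minkowski inequality together with its equality characterization; once these are cited, the rest is bookkeeping with Steiner's expansion.
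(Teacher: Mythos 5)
Your proposal is correct and follows essentially the same route as the paper's proof: expand $P(\Om_\de)$ and $P(\Om^\#_\de)$ via the Steiner formula and compare coefficients termwise, using the Alexandrov--Fenchel monotonicity of $k\mapsto (W_k(\Om)/\omega_n)^{1/(n-k)}$ to get $W_{i+1}(\Om)\geq W_{i+1}(\Om^\#)$ from the normalization $W_1(\Om)=W_1(\Om^\#)$, with the equality case reduced to the rigidity of Alexandrov--Fenchel (the paper chains consecutive indices where you compare directly to $W_1$, but this is the same inequality). No gaps.
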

 
 Thus \eqref{Nagy_out} fails for $\Om^\#$,  which is chosen with the perimeter constraint.  In view of the above theorem, to extend \eqref{Nagy_out} to higher dimensions, we need to come up with a
constraint that gives a ball $B$ for which the following inequality holds:
\begin{equation}\label{Nagy_O}
    P(\Om_\de)\leq P(B_\de),\;\forall \;\de>0.
\end{equation} 
For this, we consider the Steiner formula available for convex domains in higher dimensions.


\subsubsection{Steiner Formula for a convex domain}
\label{Steiner_section}
Let $\Om\subset \mathbb{R}^n$ be a non-empty bounded convex domain and $\delta>0$. Then for $\Om_\de$ as given in \eqref{Outer_p} with $\Ga=\pa \Om$, the \textit{Steiner formula} (cf. \cite[Chapter 4]{Schneider}) provides an expression for the perimeter of $\Om_{\de}$ as a polynomial in $\delta$:
\begin{equation}
\label{Steiner_formula}
	P(\Om_{\de})= n \sum_{i=0}^{n-1} \binom {n-1}{i} W_{i+1}(\Om)\delta^i,
\end{equation}
where the co-efficients $W_1(\Om),\dots,W_n(\Om)$ are called \textit{Quermassintegrals} or
 \textit{Minkowski  functionals} of $\Om$, which are special cases of mixed volumes (cf. \cite[Section 19.1]{Burago} or \cite[Section 4.2]{Schneider}). In particular, 
 $$ W_0(\Om)=|\Om|,\; W_1= \frac{P(\Om)}{n}\;, \;W_n(\Om)= \omega_n,$$
where $\om_n$ is the volume of the unit ball in $\RN$. For an open ball $B\subset \R^n\;(n\geq 2)$ of radius $R$, the quermassintegrals can be computed using \cite[(4.2.28)]{Schneider} as below:
\begin{equation}
\label{Quermassintegral_ball}
	W_j(B)=\omega_n R^{n-j}, \;\; \text{for}\;\; 0\leq j\leq n-1.
\end{equation}

Now, it is clear that if we choose  $B$ such that $W_i(\Om)\le W_i(B)$ for $i=1,2,\ldots, n-1$, then   \eqref{Steiner_formula} yields  \eqref{Nagy_O}. 
So we choose a ball $\Om^\cdast$ in the following way:
\begin{equation}\label{Ball_Quermass}
 \begin{aligned}
      \Om^\cdast\;\text{is the open ball centered at the origin such that}\;W_{n-1}(\Om^\cdast)=W_{n-1}(\Om).
\end{aligned}
\end{equation}
Indeed, we establish that $W_i(\Om)\le W_i(\Om^\cdast)$ for $i=1,2\ldots n-1$ (see Proposition \ref{Isoperimetric_quer}).

 \begin{remark}
 Notice that, for $n=2$, $W_{n-1}(\Om)=\frac{P(\Om)}{2}$ and hence $\Om^\cdast$ coincides with $\Om^\#.$ However, for $n>2$, $\Om^\#$ is a smaller ball than $\Om^\cdast$ (see Proposition \ref{Isoperimetric_quer}). 
 \end{remark}

 Let $\Om$ be as defined in \eqref{Domain} with $\Om_D=\Omi.$ For $n>2$ and $\Om_D$ convex, we consider a concentric annulus as below:
\begin{equation}\label{Dom23}
 \begin{aligned}
      &\widetilde{A}_I(\Om)=B_{R}\setminus \overline{B_{r}}\;\text{such that}\; |\Om|=|\widetilde{A}_I(\Om)|\;\text{and}\; W_{n-1}(\Om_D)=W_{n-1}(B_{r})=\om_n r.
\end{aligned}
\end{equation}
 Now we state the reverse Faber-Krahn type inequality for the first eigenvalue $\ta_{1,q}(\Om)$ of \eqref{Problem} when $\Om_D=\Omi$.
\begin{theorem}
\label{Theorem_In}
Let $\Om$ and $\widetilde{A}_I(\Om)$ be as given in \eqref{Domain} and \eqref{Dom23}, respectively, with $\Om_D=\normalfont{\Omi}$. For $q\in[1,p]$, let $\ta_{1,q}(\Om)$ be the first eigenvalue of \eqref{Problem} on $\Om$. If $\Om_D$ is convex, then
\begin{equation*}
    \ta_{1,q}(\Om)\leq \ta_{1,q}\left(\widetilde{A}_I(\Om)\right).
\end{equation*}
Furthermore, for $n\geq 3,$ the equality holds if and only if $\Om=\widetilde{A}_I(\Om)$ (up to a
translation).
\end{theorem}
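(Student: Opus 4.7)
The plan is a test-function argument in the spirit of the method of parallel sets: transplant the radial first eigenfunction of \eqref{Problem} on the annulus $\widetilde{A}_I(\Om)=B_R\setminus\overline{B_r}$ to $\Om$ through the distance to $\Ga_D$, then exploit the Steiner formula \eqref{Steiner_formula} and the quermassintegral inequalities of Proposition \ref{Isoperimetric_quer} to bound both sides of the Rayleigh quotient \eqref{variational_lambda}. The normalization $W_{n-1}(\Om_D)=\om_n r=W_{n-1}(B_r)$ is chosen precisely so that $W_i(\Om_D)\le W_i(B_r)$ for every $i\in\{1,\dots,n-1\}$, which is exactly what one needs to dominate the perimeters and volumes of the outer parallel bodies of $\Om_D$ by those of $B_r$.

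Let $v(x)=\phi(|x|)$ be a positive first eigenfunction on $\widetilde{A}_I(\Om)$, so that $\phi(r)=0$, $\phi'(R)=0$, and $\phi'\ge 0$ on $[r,R]$ (the monotonicity follows by integrating the radial $p$-Laplace ODE from $R$ inward using $\phi'(R)=0$). Extend $\phi$ by $\phi(R)$ on $[R,\infty)$ and define
$$u(x)=\phi\bigl(d(x,\Ga_D)+r\bigr),\qquad x\in\Om.$$
Since $d(\cdot,\Ga_D)$ is $1$-Lipschitz with $|\Gr d|=1$ a.e.\ on $\Om$, we have $u\in W^{1,p}_{\Ga_D}(\Om)$ and $|\Gr u|=|\phi'(d+r)|$ a.e. For convex $\Om_D$, $\{d=s\}=\pa(\Om_D)_s$, so the coarea formula together with the trivial bound $\mathcal H^{n-1}(\pa(\Om_D)_s\cap\Om)\le P((\Om_D)_s)$, the Steiner formula for perimeters, and the inequalities $W_i(\Om_D)\le W_i(B_r)$ give
$$\int_\Om|\Gr u|^p\,\dx\le\int_0^{R-r}|\phi'(s+r)|^p\,P\bigl((\Om_D)_s\bigr)\,ds\le\int_0^{R-r}|\phi'(s+r)|^p\,n\om_n(r+s)^{n-1}\,ds=\int_{\widetilde{A}_I(\Om)}|\Gr v|^p\,\dx.$$

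For the $L^q$ term, set $g(s)=|\Om\cap(\Om_D)_s|$ and $\tilde g(s)=|\widetilde{A}_I(\Om)\cap(B_r)_s|$. Since $\Om\cap(\Om_D)_s\subseteq(\Om_D)_s\setminus\Om_D$, the Steiner formula for volumes and Proposition \ref{Isoperimetric_quer} give $g(s)\le|(\Om_D)_s|-|\Om_D|\le|(B_r)_s|-|B_r|=\tilde g(s)$ for $s\in[0,R-r]$, while for $s>R-r$ the bound $g(s)\le|\Om|=|\widetilde{A}_I(\Om)|=\tilde g(s)$ is trivial. With $F(s):=|\phi(s+r)|^q$ non-decreasing and $F(0)=0$, integration by parts in the Riemann--Stieltjes sense (using $g(0)=\tilde g(0)=0$ and $g(\infty)=\tilde g(\infty)=|\Om|$) yields
$$\int_\Om|u|^q\,\dx-\int_{\widetilde{A}_I(\Om)}|v|^q\,\dx=\int_0^{R-r}\bigl(\tilde g(s)-g(s)\bigr)\,dF(s)\ge 0.$$
Combining the two comparisons with \eqref{variational_lambda} gives $\ta_{1,q}(\Om)\le\mathcal R_q(u)\le\mathcal R_q(v)=\ta_{1,q}\bigl(\widetilde{A}_I(\Om)\bigr)$.

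For the equality statement ($n\ge 3$): both comparisons must be tight. Equality in the gradient step forces $P((\Om_D)_s)=n\om_n(r+s)^{n-1}$ for a.e.\ $s\in(0,R-r)$, and identification of coefficients in the Steiner polynomial gives $W_i(\Om_D)=W_i(B_r)$ for every $i\in\{1,\dots,n-1\}$; the rigidity in Proposition \ref{Isoperimetric_quer} then forces $\Om_D=B_r$ up to translation. Moreover, the equality $\mathcal H^{n-1}(\pa(\Om_D)_s\cap\Om)=P((\Om_D)_s)$ for a.e.\ $s\in(0,R-r)$, combined with $\Om_D=B_r$, yields $\pa B_{r+s}\subseteq\Om$ for a.e.\ such $s$, hence $B_R\subseteq\Omo$; together with $|\Omo|=|\Om|+|\Om_D|=|B_R|$ and the Lipschitz regularity of $\Omo$, this forces $\Omo=B_R$, so $\Om=\widetilde{A}_I(\Om)$. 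The main obstacle is making the coarea/perimeter comparison rigorous on a general Lipschitz $\Om$ when $\Om_D$ is only assumed convex (in particular, identifying $\{d=s\}$ with $\pa(\Om_D)_s$ and controlling the overflow outside $\Omo$), and propagating the strict quermassintegral rigidity of Proposition \ref{Isoperimetric_quer} all the way to the shape of $\Omo$.
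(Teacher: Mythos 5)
Your proposal is correct, and it rests on the same geometric engine as the paper's proof---the quermassintegral comparison $W_i(\Om_D)\le W_i(B_r)$ of Proposition \ref{Isoperimetric_quer} fed into the Steiner formula---but the analytic implementation is genuinely different. The paper transplants the radial eigenfunction through the Hersch-type reparametrization $t(\delta)=\int_0^\delta s(\rho)^{1-p'}\drho$ of \eqref{parame}, which is engineered so that the two gradient integrals come out \emph{exactly} equal (see \eqref{e2}); all of the comparison work is then concentrated in the $L^q$ term and discharged by Lemma \ref{auxi} (the inequality $g\le G$ together with the total-mass identity $\int_0^{t_*}g^{p'}=\int_0^{T_\#}G^{p'}=|\Om|$), which the paper imports from \cite{Anoop2020}. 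You instead use the plain distance transplant $u=\phi(d(\cdot,\Ga_D)+r)$ and compare numerator and denominator of the Rayleigh quotient separately: the coarea bound $\mathcal{H}^{n-1}(\{d=s\}\cap\Om)\le P((\Om_D)_s)\le P((B_r)_s)$ (in essence Corollary \ref{Nagy_out_extension} restricted to $\Om$) handles the gradient term, and the volume Steiner formula plus a layer-cake/Stieltjes argument handles the $L^q$ term. Both inequalities happen to point in the favorable direction for the inner Dirichlet problem, which is why you can dispense with the reparametrization; your route is more self-contained (no Lemma \ref{auxi}) and makes the role of the normalization $W_{n-1}(\Om_D)=\om_n r$ transparent, at the cost of having to force equality in two places rather than one in the rigidity step. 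The technical points you flag---identifying $\{d=s\}\cap\Om$ with a subset of $\pa(\Om_D)_s$, $|\Gr d|=1$ a.e., and upgrading a.e.\ equality of the Steiner polynomials to equality of all quermassintegrals (immediate, since both sides are polynomials in $s$)---are genuine but routine, and your equality analysis (Alexandrov--Fenchel rigidity forces $\Om_D=B_r$, then $|B_R\setminus\Omo|=0$ and the volume constraint force $\Omo=B_R$) parallels the arguments behind Lemmas \ref{aux2} and \ref{auxi} in the paper. One small point to make explicit: concluding $P((\Om_D)_s)=P((B_r)_s)$ for a.e.\ $s$ from equality in the gradient step uses that $\phi'>0$ a.e.\ on $(r,R)$, which is exactly the strict monotonicity supplied by Proposition \ref{Radiality}.
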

\begin{remark}
For $n=2,$ since $A_I(\Om)=\widetilde{A}_I(\Om),$ the results obtained in \cite[Section 1.5]{Hersch} for convex $\Om_D$ will follow from the above theorem. If $\Om_D$ is a ball, then $A_I(\Om)=\widetilde{A}_I(\Om).$ Thus Theorem 1.2 of \cite{Anoop2020} is a special case of the above theorem.
\end{remark}

In \cite{Dellapietra}, Della Pietra and Piscitelli 
established  the inequality part in Theorem \ref{Theorem_In} (see \cite[Theorem 1.1, Remark 3.1]{Dellapietra}) using a web function and the co-area formula. Here we use the higher dimensional analogue (see Corollary \ref{Nagy_out_extension}) of Sz. Nagy's inequality for outer parallel sets of a convex set. In addition,
we prove the uniqueness of the maximizer in the admissible class of Lipschitz domains.

\begin{remark}($\Om_D$ is non-convex)
 To the best of our knowledge, the analogous results of the Theorem \ref{Theorem_Out} and Theorem \ref{Theorem_In} are not available in the literature for a non-convex domain  $\Om_D$ in $\R^n \;(n\geq 3).$ Here we provide various upper bounds for  $\ta_{1,q}(\Om)$ that work  for certain non-convex cases as well.
 \begin{enumerate}[(i)]
     \item \underline{\textit{Outer Dirichlet} ($\Om_D=\Om_{\text{out}}$)}:  Let $\Om=\Om_D\setminus \overline{\Omi}$ be as defined in \eqref{Domain}. Assume that $\Om$ is such that there exists an open ball $B_{\text{out}}$ such that $\overline{\Omi}\subsetneq B_{\text{out}}\subset \Om_D$. Let $\widetilde{\Om}=B_{\text{out}}\setminus\overline{\Omi}.$ Then Theorem \ref{Theorem_Out} yields that
     \begin{equation}\label{Non_cvx1}
    \ta_{1,q}(\widetilde{\Om})\leq \ta_{1,q}\left(A_O(\widetilde{\Om})\right),
\end{equation}
     where $A_O(\widetilde{\Om})=B_R\setminus \overline{B_r}$ satisfying $|\widetilde{\Om}|=|A_O(\widetilde{\Om})|$ and $P(B_{\text{out}})=P(B_R).$ Since $\widetilde{\Om}\subset \Om$, using domain monotonicity, we get $\ta_{1,q}(\Om)\leq \ta_{1,q}(\widetilde{\Om}).$ Hence using \eqref{Non_cvx1}, we obtain 
     $$\ta_{1,q}(\Om)\leq \ta_{1,q}\left(A_O(\widetilde{\Om})\right).$$
     If $\Om_D$ is a ball, then we can take $B_{\text{out}}=\Om_D$, then the above result coincides with \cite[Theorem 1.1]{Anoop2020}.
     
     \item \underline{\textit{Inner Dirichlet} ($\Om_D=\Om_{\text{in}}$)}: Suppose $\Om=\Omo\setminus \overline{\Om_D}$ is as defined in \eqref{Domain}.  Let $\text{Conv}(\Om_D)$ be the convex hull of $\Om_D$. Assume that $\overline{\text{Conv}(\Om_D)}\subsetneq \Om_{\text{out}}.$ Let $\widetilde{\Om}=\Omo\setminus\overline{\text{Conv}(\Om_D)}.$ Now by Theorem \ref{Theorem_In}, we have 
     \begin{equation}\label{Non_cvx2}
        \ta_{1,q}(\widetilde{\Om})\leq \ta_{1,q}\left(\widetilde{A}_I(\widetilde{\Om})\right),
     \end{equation}
     where $\widetilde{A}_I(\widetilde{\Om})=B_R\setminus \overline{B_r}$ satisfying $|\widetilde{\Om}|=|\widetilde{A}_I(\widetilde{\Om})|$ and $W_{n-1}(\text{Conv}(\Om_D))=W_{n-1}(B_r).$ Since $\Om_D\subset \text{Conv}(\Om_D),$ we have $\widetilde{\Om}\subset \Om$ and hence using domain monotonicity, we get $\ta_{1,q}(\Om)\leq \ta_{1,q}(\widetilde{\Om}).$ Therefore from \eqref{Non_cvx2}, we conclude
     $$\ta_{1,q}(\Om)\leq \ta_{1,q}\left(\widetilde{A}_I(\widetilde{\Om})\right).$$
 \end{enumerate}
\end{remark}

 The rest of the paper is organized as follows. In Section \ref{preli}, we state the existence results for the first eigenvalue of \eqref{Problem}. We prove the Sz. Nagy's type inequalities (Proposition \ref{Nagy_in_extension}, Theorem \ref{nag}) in Section \ref{ReverseNagy}. In Section \ref{targetOut}, we prove Theorem \ref{Theorem_Out}, and Section \ref{main} contains the proof of Theorem \ref{Theorem_In}. Finally, in Section \ref{remark}, we point out some consequences of the main results of this article and state a few open problems.

\section{ Preliminaries}
\label{preli}
In this section, we prove the existence of the first eigenvalue $\ta_{1,q}$ of \eqref{Problem} and state a few properties of the first eigenfunctions.

\begin{proposition}\label{Existence}
For $q\in [1,p],$ let $\ta_{1,q}(\Om)$ be as defined in \eqref{variational_lambda}. Then there exists $u_q\in W^{1,p}_{\Ga_D}(\Om)$ such that
\begin{enumerate}[(i)]
    \item $\mathcal{R}_q(u_q)=\ta_{1,q}(\Om)$ and $u_q>0$ a.e. in $\Om.$
    \item $u_q\in C^{1,\ga}(\Om)$, where $0<\ga<1$.
    
    \item if $\mathcal{R}_q(v_q)=\ta_{1,q}(\Om),$ for some $v_q\in W^{1,p}_{\Ga_D}(\Om),$ then $u=cu_q$ for some $c\in \R.$

\end{enumerate}
\end{proposition}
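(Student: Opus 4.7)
The plan is to obtain (i) by the direct method in the calculus of variations, then deduce positivity from the Euler--Lagrange equation together with the strong maximum principle, invoke standard regularity theory for quasilinear equations to get (ii), and finally establish (iii) through a convexity/Picone-type argument adapted to the mixed boundary setting.

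For part (i), I would pick a minimizing sequence $\{u_k\} \subset W^{1,p}_{\Ga_D}(\Om)$ for $\ta_{1,q}(\Om)$, normalized by $\|u_k\|_{L^q(\Om)} = 1$, so that $\int_\Om |\Gr u_k|^p\,\dx \to \ta_{1,q}(\Om)$. Because $\Ga_D = \pa \Om_D$ has positive $(n-1)$-Hausdorff measure, a Poincaré-type inequality on $W^{1,p}_{\Ga_D}(\Om)$ yields that $\{u_k\}$ is bounded in $W^{1,p}(\Om)$. Passing to a subsequence, $u_k \wra u_q$ in $W^{1,p}(\Om)$, $u_k \to u_q$ strongly in $L^q(\Om)$ (the embedding $W^{1,p}(\Om) \embd L^q(\Om)$ is compact for $q \in [1,p]$ in every dimension since $q < p^*$ when $p < n$), and pointwise a.e. The trace operator being weakly continuous keeps $u_q \in W^{1,p}_{\Ga_D}(\Om)$, and $\|u_q\|_{L^q}=1$. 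Weak lower semicontinuity of $u \mapsto \int_\Om |\Gr u|^p\dx$ then gives $\mathcal{R}_q(u_q) \leq \ta_{1,q}(\Om)$, so $u_q$ is a minimizer. Replacing $u_q$ by $|u_q|$ leaves $\mathcal{R}_q$ invariant, so we may assume $u_q \geq 0$. Computing the Euler--Lagrange equation shows $u_q$ is a weak solution of \eqref{Problem} with $\ta = \ta_{1,q}(\Om)$; since $u_q \geq 0$ and $u_q \not\equiv 0$, the strong maximum principle/Harnack inequality of Trudinger for quasilinear elliptic operators yields $u_q > 0$ a.e.\ in $\Om$.

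For (ii), I would invoke interior $C^{1,\ga}$ regularity for weak solutions of $-\De_p u = f(u)$ with $f$ locally bounded, which is the classical theory of DiBenedetto and Tolksdorf; since $u_q \in L^\infty_{loc}(\Om)$ by Moser iteration and the right-hand side $\ta_{1,q}(\Om) \|u_q\|_{L^q}^{p-q}|u_q|^{q-2}u_q$ is then locally bounded, this gives $u_q \in C^{1,\ga}(\Om)$.

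The main obstacle is (iii), the simplicity/uniqueness up to scaling of the first eigenfunction. The clean route is the hidden convexity / Díaz--Saa approach: any two positive minimizers $u_q, v_q$ normalized in $L^q$ both satisfy the Euler--Lagrange equation with the same $\ta_{1,q}(\Om)$. Setting $\sigma_t := \bigl(t u_q^p + (1-t) v_q^p\bigr)^{1/p}$ for $t \in [0,1]$, the functional $u \mapsto \int_\Om |\Gr u|^p\dx$ is convex along the curve $t \mapsto \sigma_t$ (this is the classical convexity identity underlying Picone's inequality, applicable here because $u_q, v_q$ both vanish on $\Ga_D$ in the trace sense, so $\sigma_t \in W^{1,p}_{\Ga_D}(\Om)$). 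Combined with the concavity of $u \mapsto \|u\|_{L^q}^q$ along the same curve, one deduces that $\mathcal{R}_q(\sigma_t)$ can equal $\ta_{1,q}(\Om)$ only when $u_q$ and $v_q$ are proportional, giving the claimed uniqueness. The delicate point is ensuring that the Picone manipulations go through with only Dirichlet data on the portion $\Ga_D$ of the boundary, but this is essentially an integration by parts argument where the Neumann part $\pa\Om \setminus \Ga_D$ contributes no boundary term because of the vanishing normal derivative of the test functions obtained from ratios $u_q^p/v_q^{p-1}$, which lie in $W^{1,p}_{\Ga_D}(\Om)$ by the $C^{1,\ga}$ regularity and strict positivity established in (i)--(ii).
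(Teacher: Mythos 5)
Your proposal is correct and follows essentially the same route as the paper, which simply defers each part to standard references: the direct method with the compact embedding $W^{1,p}_{\Ga_D}(\Om)\hookrightarrow L^q(\Om)$ plus the strong maximum principle for (i), Lieberman-type $C^{1,\ga}$ regularity for (ii), and the hidden-convexity/D\'{\i}az--Saa simplicity argument (the one behind the cited Nazarov proposition) for (iii). The only cosmetic remark is that in (iii) the curve $\sigma_t=(tu_q^p+(1-t)v_q^p)^{1/p}$ already closes the argument without any integration by parts or Picone test functions $u_q^p/v_q^{p-1}$, so your final paragraph about boundary terms is unnecessary; you should also note explicitly that an arbitrary minimizer $v_q$ has constant sign (replace it by $|v_q|$ and apply the maximum principle) before comparing it with $u_q$.
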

\begin{proof}
 $(i)$ For $q\in [1,p],$ since the inclusion $W^{1,p}_{\Ga_D}(\Om)\hookrightarrow L^q(\Om)$ is compact, using the standard variational arguments (cf. \cite{Azorero_Garcia} or \cite[Proposition A.2]{Anoop2020}) we easily obtain that $\ta_{1,q}(\Om)>0$ and $\exists\; u_q\in W^{1,p}_{\Ga_D}(\Om)$ such that $\mathcal{R}_q(u_q)=\ta_{1,q}(\Om).$ Using the strong maximum principle \cite[Proposition 3.2]{Prashanth}, we also deduce that $u_q>0$ a.e. in $\Om$.
 
    $(ii)$ For a proof, see \cite[Theorem 1]{Lieber}.
    
    $(iii)$ This result is known for the complete Dirichlet problem; cf. \cite[Proposition 1.1]{Nazarov2001}. The same set of arguments work for the mixed boundary cases as well. 
    
  
\end{proof}

Next, we state a monotonicity result for the first eigenfunction of \eqref{Problem} on concentric annular regions. 
\begin{proposition}\label{Radiality}
Let $\Om=B_{\be}\setminus \overline{B_{\al}},$ for some $0<\al<\be$, where $B_{\al}$ and $B_{\be}$ are open balls centered at the origin of radius $\al$ and $\be$, respectively. Suppose that $q\in [1,p]$. Let $u_q>0$ be an eigenfunction of \eqref{Problem} associated to $\ta_{1,q}(\Om)$. Then $u_q$ is a radial function. Furthermore, the following holds:
\begin{enumerate}[(i)]
    \item (Outer Dirichlet) $u_q$ is strictly radially decreasing if $\Ga_D=\partial B_{\be}$,
    \item (inner Dirichlet) $u_q$ is strictly radially increasing if $\Ga_D=\partial B_{\al}$.
    \end{enumerate}
\end{proposition}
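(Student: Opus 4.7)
The plan is to derive radiality from the simplicity of $\ta_{1,q}$ combined with the orthogonal symmetry of the annulus, and then extract strict monotonicity from the one-dimensional ODE obtained by writing the equation in the radial variable.

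\textbf{Step 1: Radiality.} For any $T\in O(n)$ one has $T\Om=\Om$ and $T\Ga_D=\Ga_D$, so the composition $u_q\circ T$ belongs to $W^{1,p}_{\Ga_D}(\Om)$. The Rayleigh quotient $\mathcal{R}_q$ is invariant under orthogonal transformations, hence $\mathcal{R}_q(u_q\circ T)=\ta_{1,q}(\Om)$, and Proposition \ref{Existence}(iii) yields $u_q\circ T=c_T u_q$ for some $c_T\in\R$. Since $T$ preserves the $L^q$-norm, $|c_T|=1$, and positivity of both sides forces $c_T=1$. Thus $u_q(x)=v(|x|)$ for a function $v:[\al,\be]\to\R$ with $v>0$ on $(\al,\be)$.

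\textbf{Step 2: Radial ODE and the key monotone quantity.} Since $u_q\in C^{1,\ga}(\Om)$ by Proposition \ref{Existence}(ii), the radial profile $v$ lies in $C^{1,\ga}(\al,\be)$, and testing the weak formulation of \eqref{Problem} against radial functions shows that
$$\phi(r) := r^{n-1}|v'(r)|^{p-2}v'(r)$$
is of class $C^1$ on $(\al,\be)$ and satisfies
\begin{equation*}
\phi'(r) \;=\; -\,\ta_{1,q}(\Om)\, C_0\, r^{n-1}\, v(r)^{q-1},\qquad r\in(\al,\be),
\end{equation*}
where $C_0=\bigl(\int_{\Om}u_q^q\,\dx\bigr)^{(p-q)/q}>0$. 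Because $v>0$ on $(\al,\be)$, the right-hand side is strictly negative, so $\phi$ is strictly decreasing on $(\al,\be)$.

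\textbf{Step 3: Using the Neumann endpoint.} The Neumann condition $\partial_\eta u_q=0$ on the non-Dirichlet component of $\pa\Om$ translates, after noting the direction of the outward normal on each sphere, into $v'(\al)=0$ in the outer Dirichlet case and $v'(\be)=0$ in the inner Dirichlet case. In the outer case one then has $\phi(\al)=0$, and the strict monotonicity of $\phi$ forces $\phi(r)<0$, hence $v'(r)<0$, for every $r\in(\al,\be]$; so $v$ is strictly decreasing. In the inner case $\phi(\be)=0$ gives $\phi(r)>0$, hence $v'(r)>0$, for every $r\in[\al,\be)$, and $v$ is strictly increasing.

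\textbf{Main obstacle.} The only delicate point is to justify passing from the weak radial equation to the pointwise identity $\phi'=-\ta_{1,q}(\Om)C_0 r^{n-1}v^{q-1}$ with $\phi\in C^1$, and to read off the Neumann condition $v'(\al)=0$ (respectively $v'(\be)=0$) pointwise from the weak formulation. Both are routine once the $C^{1,\ga}$ regularity of $u_q$ is available and radial test functions are used in the weak form; aside from this, the argument is essentially the Hopf-type observation that a strictly monotone auxiliary first-order quantity $\phi$ cannot vanish twice.
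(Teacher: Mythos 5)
Your proposal is correct and follows essentially the same route as the paper: radiality from simplicity of $\ta_{1,q}$ under the $O(n)$-action, followed by the observation that $r^{n-1}|v'|^{p-2}v'$ is strictly decreasing (since $v>0$ makes the right-hand side of the radial ODE negative) and therefore has a fixed sign away from the Neumann endpoint where it vanishes. The only difference is cosmetic — you carry the normalization constant $C_0$ explicitly and spell out the $O(n)$-invariance argument that the paper leaves implicit.
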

\begin{proof}
By Proposition \ref{Existence}, $\ta_{1,q}$ is simple, and hence radiality of $u_q$ follows immediately.

$(i)$ We adapt the ideas of proof of \cite[Proposition A.4]{Anoop2020} here.

Let $\Ga_D=\partial B_{\be}$. As $u_q$ is radial, $u_q(x)=f_q(|x|),$ for some function $f_q:\R\longrightarrow \R$. Therefore $f_q$ satisfies the following ordinary differential equation associated to \eqref{Problem}: 
\begin{equation*}
    \begin{aligned}
    -\left(|f_q'(r)|^{p-2} f_q'(r)r^{n-1}\right)' &=\ta_{1,q}(\Om)r^{n-1} f_q(r)^{q-1}\;\text{in}\; (\al,\be),\\
    f_q'(\al)&=0\;;\; f_q(\be)=0.
    \end{aligned}
\end{equation*}
Since $f_q(r)>0$ in $(\al,\be)$, we get $\left(|f_q'(r)|^{p-2} f_q'(r)r^{n-1}\right)'<0$ in $(\al,\be).$ Therefore, \\$|f_q'(r)|^{p-2} f_q'(r)r^{n-1}$ is strictly decreasing   and hence by the boundary condition at $\al$, we get $f_q'<0$ as required.

$(ii)$ Proof follows along the same line as in $(i).$
\end{proof}
\begin{remark}\label{Non_rad}
For $q>p,$  a first eigenfunction $u_q$ of \eqref{Problem} need not be radial on an arbitrary annular region $B_{\be}\setminus \overline{B_{\al}}$; see \cite[Proposition 1.2]{Nazarov2000} for such symmetry-breaking phenomena. Thus the assumption $q\in [1,p]$ in Proposition \ref{Radiality} can not be dropped.
\end{remark}

\section{ Nagy's type inequality in higher dimensions}\label{ReverseNagy}
In this section, we prove the Sz. Nagy's type inequality for both outer parallel sets $\partial \Om_{\de}$ and inner parallel sets $\partial \Om_{-\de}$ for a convex domain $\Om$ in $\R^n$.  
The following  Alexandrov-Fenchel inequalities for quermassintegrals (cf. \cite[Section 6.4, (6.4.7)]{Schneider}) of a convex domain in $\R^n$ plays a vital role in our proofs.

\begin{proposition}[Alexandrov-Fenchel inequality]\label{Alexandrov_proposition}
Let $\Om\subset \R^n$ be a bounded convex domain. Then, 
 \begin{equation}
 	\label{Alexandrov_Fenchel}
 	\left(\frac{W_j(\Om)}{\omega_n}\right)^{\frac{1}{n-j}}\geq \left(\frac{W_i(\Om)}{\omega_n}\right)^{\frac{1}{n-i}},\;\;\;\text{for}\;\;0\leq i<j<n,
 \end{equation}
  and the equality holds for some $i$ and $j$, if and only if $\Om$ is a ball.
\end{proposition}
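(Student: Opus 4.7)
The plan is to derive the chain of inequalities from the Brunn--Minkowski inequality via the mixed-volume structure of quermassintegrals. Throughout I would use the identification $W_k(\Om)=V(\Om,\ldots,\Om,B,\ldots,B)$, a mixed volume with $n-k$ copies of $\Om$ and $k$ copies of the closed unit ball $B$ in $\R^n$.

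The key intermediate step I would prove is the log-concavity of the quermassintegral sequence: $W_k(\Om)^2\geq W_{k-1}(\Om)\,W_{k+1}(\Om)$ for $1\leq k\leq n-1$. This is the classical Alexandrov--Fenchel inequality for mixed volumes, $V(K_1,K_2,C_3,\ldots,C_n)^2\geq V(K_1,K_1,C_3,\ldots,C_n)\,V(K_2,K_2,C_3,\ldots,C_n)$, specialized to $K_1=\Om$, $K_2=B$, and $C_3,\ldots,C_n$ chosen as $n-k-1$ copies of $\Om$ together with $k-1$ copies of $B$.

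Setting $c_k:=\log(W_k(\Om)/\omega_n)$, log-concavity says that $c_k$ is a concave function of $k\in\{0,1,\ldots,n\}$; moreover $c_n=0$ because $W_n(\Om)=\omega_n$. By concavity, for $0\leq k<m<n$ the graph at $m$ lies above the chord joining $(k,c_k)$ to $(n,0)$, giving $c_m\geq c_k(n-m)/(n-k)$ and hence $c_m/(n-m)\geq c_k/(n-k)$. Thus $c_k/(n-k)$ is non-decreasing in $k\in\{0,\ldots,n-1\}$, and exponentiating recovers
\[
\left(\frac{W_j(\Om)}{\omega_n}\right)^{\frac{1}{n-j}}\geq \left(\frac{W_i(\Om)}{\omega_n}\right)^{\frac{1}{n-i}},\qquad 0\leq i<j<n.
\]

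For the equality statement, Brunn--Minkowski, and hence Alexandrov--Fenchel, is strict unless the two bodies involved are homothetic (up to translation). Propagating strictness through the reduction above, equality in the chain for some $0\leq i<j<n$ forces equality in all the intervening log-concavity relations, which in turn forces $\Om$ to be homothetic to $B$, i.e., a ball; conversely, for a ball all quermassintegrals are given by \eqref{Quermassintegral_ball} and equality is immediate. The principal obstacle is the Alexandrov--Fenchel inequality itself, a classical but genuinely non-elementary result; since the authors cite Schneider, \S6.4 for it, one could treat that inequality as a black box and only verify the concavity-to-monotonicity step above explicitly.
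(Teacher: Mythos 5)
The paper does not actually prove this proposition: it is quoted directly from Schneider \cite[Section 6.4, (6.4.7)]{Schneider} and used as a black box. Your derivation --- writing $W_k(\Om)$ as the mixed volume with $n-k$ copies of $\Om$ and $k$ copies of the unit ball, extracting the log-concavity $W_k(\Om)^2\geq W_{k-1}(\Om)W_{k+1}(\Om)$ from the general Alexandrov--Fenchel inequality, and then using $c_n=\log(W_n(\Om)/\omega_n)=0$ together with the chord argument to turn concavity of $c_k$ into monotonicity of $c_k/(n-k)$ --- is precisely the standard route taken in that reference, and the inequality part is correct as written.

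The one point you overstate is the equality case. It is \emph{not} true that the general Alexandrov--Fenchel inequality (or even Minkowski's second inequality for $n\geq 3$) is strict unless the two bodies are homothetic; the equality cases of the general inequality are notoriously subtle and were fully characterized only recently, so ``propagating strictness'' cannot be justified by that blanket claim. What rescues your argument is the structure you already set up: equality in \eqref{Alexandrov_Fenchel} for some $i<j<n$ means $c_j$ meets the chord from $(i,c_i)$ to $(n,0)$ at an interior point, so concavity forces $c$ to be affine on $\{i,\dots,n\}$, and in particular $W_{n-1}(\Om)^2=\omega_n W_{n-2}(\Om)$. This is Minkowski's quadratic inequality for the pair $(\Om,B)$ with $B$ a ball, and for a reference body that is a ball and a convex body with nonempty interior the classical characterization (via the support-function/spherical-harmonics argument, cf.\ Schneider, Section 6.6) does give that $\Om$ is a ball. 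So the conclusion stands, but you should invoke this classical special case rather than a false general principle.
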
   
Observe that, for $i=0,$ and $j=1,$ \eqref{Alexandrov_Fenchel} gives the following classical isoperimetric inequality \cite[Chapter 2]{Kesavan2006}:
  $$P(\Om)\ge n \omega_n^\frac{1}{n} |\Omega|^{\frac{n-1}{n}}.$$

Let $r_{\Om}$ be the inradius of $\Om$, i.e., $r_\Om$ is the supremum of the radii of all spheres that are
contained in $\Om$. 
We need the following lemma (cf. \cite[Lemma 3.1]{Brandolini}) to prove Proposition \ref{Nagy_in_extension}.
\begin{lemma}\label{Derivative_lemma}
Let $\Om\subset\mathbb{R}^n$ be a bounded, convex domain. Then
$$-\frac{d}{dt} P(\Om_{-t})\geq n(n-1)W_2(\Om_{-t}),\;\;\text{for a.e.}\; t\in(0,r_{\Om}).$$
If $\Om$ is a ball, then equality holds for every $t\in (0,r_{\Om})$.
\end{lemma}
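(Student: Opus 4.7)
\textbf{Proof plan for Lemma \ref{Derivative_lemma}.}
The plan is to use the Steiner formula \eqref{Steiner_formula} to expand $P\bigl((\Om_{-s})_h\bigr)$ as a polynomial in $h$, and then compare it with $P(\Om_{-t})$ via the elementary inclusion $(\Om_{-s})_h \subseteq \Om_{-t}$ whenever $s = t+h$. This inclusion is the key geometric ingredient: if $y \in \Om_{-(t+h)}$ then the open ball $B_h(y)$ lies inside $\Om_{-t}$ (since every point of $B_h(y)$ is within distance $h$ of $y$ and $d(y,\partial\Om)\ge t+h$), hence any point within distance $h$ of $\Om_{-(t+h)}$ lies in $\Om_{-t}$. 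Convexity of $\Om$ guarantees that all of $\Om_{-t}$, $\Om_{-(t+h)}$ and $(\Om_{-(t+h)})_h$ are convex bodies, so we may freely use perimeter monotonicity under inclusion of convex bodies.

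First I would fix $t \in (0,r_\Om)$ and $h > 0$ small, set $K := \Om_{-(t+h)}$, and apply the Steiner formula to $K$ at distance $h$:
\begin{equation*}
 P(K_h) \;=\; n\sum_{i=0}^{n-1}\binom{n-1}{i} W_{i+1}(K)\,h^{i}
       \;=\; P(K) + n(n-1)\,W_2(K)\,h + O(h^{2}),
\end{equation*}
where the $O(h^{2})$ term is controlled uniformly in $t$ on compact subintervals of $(0,r_\Om)$ since the quermassintegrals $W_j(\Om_{-s})$ are bounded on such subintervals. Combining with $K_h \subseteq \Om_{-t}$ and perimeter monotonicity of convex bodies gives
\begin{equation*}
 P(\Om_{-t}) \;\geq\; P(K_h) \;=\; P(\Om_{-(t+h)}) + n(n-1)\,W_2(\Om_{-(t+h)})\,h + O(h^{2}).
\end{equation*}
Dividing by $h$ and letting $h\to 0^{+}$, and using that $s\mapsto W_2(\Om_{-s})$ is continuous (because $s\mapsto \Om_{-s}$ is continuous in the Hausdorff metric on $[0,r_\Om)$ and quermassintegrals are continuous functionals on convex bodies), we obtain the right Dini derivative bound
\begin{equation*}
 -\frac{d^{+}}{dt}P(\Om_{-t}) \;\geq\; n(n-1)\,W_2(\Om_{-t}).
\end{equation*}
Since $t\mapsto P(\Om_{-t})$ is monotone non-increasing on $(0,r_\Om)$, it is differentiable almost everywhere and the one-sided derivatives coincide with $\tfrac{d}{dt}P(\Om_{-t})$ a.e., giving the claimed inequality for a.e.\ $t$.

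For the equality statement when $\Om$ is a ball of radius $R$, a direct computation suffices: $\Om_{-t}=B_{R-t}$, so $P(\Om_{-t})=n\omega_n(R-t)^{n-1}$ and $W_2(\Om_{-t})=\omega_n(R-t)^{n-2}$ by \eqref{Quermassintegral_ball}, whence $-\tfrac{d}{dt}P(\Om_{-t})=n(n-1)\omega_n(R-t)^{n-2}=n(n-1)W_2(\Om_{-t})$ for every $t\in(0,R)$. The main technical point I expect to verify carefully is that the error term $O(h^{2})$ in the Steiner expansion is uniform in $t$ on any compact subinterval of $(0,r_\Om)$, which is needed to pass to the limit cleanly; this reduces to bounding the higher quermassintegrals $W_{j}(\Om_{-s})$, $j\ge 3$, uniformly on such subintervals, which follows from the Hausdorff continuity of $s\mapsto\Om_{-s}$.
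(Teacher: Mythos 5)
Your proof is correct. Note that the paper does not prove this lemma at all --- it is quoted from \cite[Lemma 3.1]{Brandolini} --- but your argument (the inclusion $(\Om_{-(t+h)})_h \subseteq \Om_{-t}$, perimeter monotonicity for nested convex bodies, and the first-order Steiner expansion of $P\bigl((\Om_{-(t+h)})_h\bigr)$ in $h$, with the $O(h^2)$ term controlled by $W_j(\Om_{-s})\le W_j(\Om)$) is exactly the standard proof of the cited result, and all the ingredients you invoke are valid for convex bodies. One small simplification: you can dispense with the Hausdorff continuity of $s\mapsto \Om_{-s}$ altogether by using the left difference quotient instead, i.e.\ $P(\Om_{-(t-h)})\ge P\bigl((\Om_{-t})_h\bigr)= P(\Om_{-t})+n(n-1)W_2(\Om_{-t})h+O(h^2)$, which bounds the left Dini derivative at $t$ directly in terms of $W_2(\Om_{-t})$ with no limit of $W_2(\Om_{-(t+h)})$ needed; the continuity you cite is nonetheless true (the inner parallel bodies form a concave, hence Hausdorff-continuous, family on $(0,r_\Om)$), so this is a streamlining, not a repair.
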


Now we give a proof of Sz. Nagy's type inequality for inner parallels of a convex set in any space dimensions.
\begin{proof}[Proof of Proposition \ref{Nagy_in_extension}]
$(i)$ By Lemma \ref{Derivative_lemma} and using \eqref{Alexandrov_Fenchel} with $j=2$ and $i=1$, we get
\begin{align}\label{Alexandrov_1}
    -\frac{d}{dt} P(\Om_{-t}) &\geq n(n-1)W_2(\Om_{-t})\geq n^\frac{1}{n-1}(n-1)\om_n^\frac{1}{n-1}P(\Om_{-t})^\frac{n-2}{n-1}\;\text{for a.e.}\;t\in (0,r_\Om).
    \end{align}
Thus
    \begin{align*}\label{Om1}
     \frac{-\frac{d}{dt} P(\Om_{-t})}{P(\Om_{-t})^\frac{n-2}{n-1}} &\geq n^\frac{1}{n-1}(n-1)\om_n^\frac{1}{n-1},\;\text{for}\;t\in (0,r_\Om)\;\text{for a.e.}\;t\in (0,r_\Om).
\end{align*}
For the ball $\Om^\#$, the same computations as above yields
    \begin{equation*}\label{ann1}
     \frac{-\frac{d}{dt} P(\Om^\#_{-t})}{P(\Om^\#_{-t})^\frac{n-2}{n-1}} =n^\frac{1}{n-1}(n-1)\om_n^{\frac{1}{n-1}},\;\text{for every}\;t\in (0,r_\Om).
    \end{equation*}
Therefore, 
\begin{equation*}
    \frac{-\frac{d}{dt} P(\Om_{-t})}{P(\Om_{-t})^\frac{n-2}{n-1}}\geq \frac{-\frac{d}{dt} P(\Om^\#_{-t})}{P(\Om^\#_{-t})^\frac{n-2}{n-1}},\;\text{for a.e.}\;t\in (0,r_\Om).
\end{equation*}
Now for $\de\in (0,r_\Om)$, we integrate the above inequality from 0 to $\de$ to get
\begin{align}
     P(\Om)^{\frac{1}{n-1}}-P(\Om_{-\de})^\frac{1}{n-1}&\geq P(\Om^\#)^\frac{1}{n-1}-P(\Om^\#_{-\de})^\frac{1}{n-1},\;\text{for}\;\de\in (0,r_\Om).\nonumber
     \end{align}
    Since $P(\Om)=P(\Om^\#)$ and $\de$ is arbitrary, we obtain
     \begin{equation}\label{ngout}
     P(\Om_{-\de})\leq P(\Om^\#_{-\de}),\;\;\text{for all}\;\de\in (0,r_\Om).
\end{equation}

 $(ii)$ If $\Om$ is a ball, then $\Om=\Om^\#$ (upto translation) and hence equality holds in (\ref{ngout}). Conversely, if $P(\Om_{-\de})= P(\Om^\#_{-\de}),$ for some $\de\in (0,r_\Om)$, then by retracing the calculations that yield \eqref{ngout}, we conclude that equality must happen in \eqref{Alexandrov_1} for a.e. $t\in (0,\de)$. In particular, for some $t\in (0,\de)$, we have
\begin{align*}
    n(n-1)W_2(\Om_{-t})= n^\frac{1}{n-1}(n-1)\om_n^\frac{1}{n-1}P(\Om_{-t})^\frac{n-2}{n-1}.
    \end{align*}
    This yields
    \begin{equation*}
    \left(\frac{W_2(\Om_{-t})}{\omega_n}\right)^{\frac{1}{n-2}}= \left(\frac{W_1(\Om_{-t})}{\omega_n}\right)^{\frac{1}{n-1}}.
\end{equation*}
Therefore, by Proposition \ref{Alexandrov_proposition} (for $j=2$ and $i=1$), $\Om_{-t}$ must be a ball. Now $\Om=\left(\Om_{-t}\right)_t,$ and hence $\Om$ must be an open ball. This completes the proof.
\end{proof}

 Next, we give a proof of Theorem \ref{nag}.
 \begin{proof}[Proof of Theorem \ref{nag}]
  Let $R$ be the radius of $\Om^{\#}$. Since $P(\Om)=P(\Om^\#)$, we have
   $R= \left(\frac{P(\Om)}{n \omega_n}\right)^{\frac{1}{n-1}}$ and $W_1(\Om)=W_1(\Om^\#).$ Observe that
   \begin{equation*}\label{Ball}
       W_j(\Om^\#)=\omega_n^{\frac{1}{(n-j+1)}} \left( W_{j-1}(\Om^\#) \right)^{\frac{n-j}{n-j+1}}
   \end{equation*}
 Now,  by \eqref{Alexandrov_Fenchel} and the above identity, we get
 \begin{align}\label{compare1}
 	 W_2(\Om) \geq  \omega_n^{\frac{1}{n-1}} \left(W_1(\Om)\right)^{\frac{n-2}{n-1}}
 	 = W_2(\Om^{\#}).
 	 \end{align}
  Proceeding in this way, we get
  \begin{align}
  W_j(\Om) \geq \omega_n^{\frac{1}{(n-j+1)}} W_{j-1}(\Om)^{\frac{n-j}{n-j+1}}&\geq \omega_n^{\frac{1}{(n-j+1)}} \left( W_{j-1}(\Om^\#)\right)^{\frac{n-j}{n-j+1}}= W_j(\Om^{\#}),\;\;\text{for}\;2\leq j <n.\nonumber
  \end{align}
  Thus
  \begin{equation}\label{compareF}
   W_j(\Om) \geq W_j(\Om^{\#}), \;\text{for}\;1\leq j <n.
  \end{equation}
  Now by Steiner formula (\ref{Steiner_formula}), we get
  \begin{equation*}
  \label{nazyopp}
  \begin{split}
   P(\Om_{\de})
        = n \sum_{i=0}^{n-1} \binom {n-1}i W_{i+1}(\Om)\delta^i
  			& \geq n \sum_{i=0}^{n-1} \binom {n-1}i W_{i+1}(\Om^{\#})\delta^i\\
  			&=P(\Om^\#_{\de}).
  \end{split}
  \end{equation*}
  Conversely, if $\Om$ is not a ball, then by Proposition \ref{Alexandrov_proposition}, strict inequality occurs in \eqref{compareF} for all $2\leq j<n$. Thus by \eqref{Steiner_formula}, $P(\Om_{\de}) > P(\Om^\#_{\de}).$ 
  \end{proof}
First, we prove the following proposition. 

\begin{proposition}\label{Isoperimetric_quer}
Let $\Om\subset\mathbb{R}^n$ be a bounded, convex domain and $\Om^\cdast$ be as defined in \eqref{Ball_Quermass}. Then 
$$W_i(\Om)\le W_i(\Om^\cdast),\;\;\text{for all}\;\;0\leq i<n-1.$$
Moreover, equality holds if and only if $\Om$ is a ball.
\end{proposition}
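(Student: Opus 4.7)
The plan is to apply the Alexandrov--Fenchel inequality (Proposition \ref{Alexandrov_proposition}) directly with indices $i$ and $j=n-1$, using the defining property $W_{n-1}(\Omega^{\cdast})=W_{n-1}(\Omega)$ together with the explicit formula \eqref{Quermassintegral_ball} for quermassintegrals of balls.

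First, I would record that if $\Omega^{\cdast}=B_{\rho}$, then by \eqref{Quermassintegral_ball} we have $W_{n-1}(\Omega^{\cdast})=\omega_n\rho$, so the constraint forces
\[
\rho=\frac{W_{n-1}(\Omega)}{\omega_n},
\qquad
W_i(\Omega^{\cdast})=\omega_n\rho^{n-i}=\omega_n\left(\frac{W_{n-1}(\Omega)}{\omega_n}\right)^{n-i}
\quad\text{for }0\leq i\leq n-1.
\]

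Next, I would invoke the Alexandrov--Fenchel inequality \eqref{Alexandrov_Fenchel} with the pair $(i,j)=(i,n-1)$ for any $0\leq i<n-1$. This reads
\[
\frac{W_{n-1}(\Omega)}{\omega_n}
=\left(\frac{W_{n-1}(\Omega)}{\omega_n}\right)^{\frac{1}{n-(n-1)}}
\geq \left(\frac{W_i(\Omega)}{\omega_n}\right)^{\frac{1}{n-i}}.
\]
Raising both sides to the $(n-i)$-th power and multiplying through by $\omega_n$ yields
\[
\omega_n\left(\frac{W_{n-1}(\Omega)}{\omega_n}\right)^{n-i}\geq W_i(\Omega),
\]
which is exactly $W_i(\Omega^{\cdast})\geq W_i(\Omega)$, giving the claimed inequality for all $0\leq i<n-1$.

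For the equality case, I would use the rigidity clause of Proposition \ref{Alexandrov_proposition}: if $W_i(\Omega)=W_i(\Omega^{\cdast})$ for some $0\leq i<n-1$, then equality holds in \eqref{Alexandrov_Fenchel} for the pair $(i,n-1)$, which forces $\Omega$ to be a ball; conversely, if $\Omega$ is a ball then the constraint $W_{n-1}(\Omega)=W_{n-1}(\Omega^{\cdast})$ forces $\Omega=\Omega^{\cdast}$ up to translation, and equality holds in every $W_i$. I do not anticipate a substantive obstacle: the whole argument is a direct reading of Alexandrov--Fenchel at the extremal index $j=n-1$, and the only care needed is to match the normalizations in \eqref{Quermassintegral_ball} with the definition \eqref{Ball_Quermass} of $\Omega^{\cdast}$.
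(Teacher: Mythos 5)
Your proof is correct and is essentially the same as the paper's: both apply the Alexandrov--Fenchel inequality \eqref{Alexandrov_Fenchel} with $j=n-1$ together with the constraint $W_{n-1}(\Omega)=W_{n-1}(\Omega^{\cdast})$ and the ball formula \eqref{Quermassintegral_ball}, and both settle the equality case by the rigidity clause of Proposition \ref{Alexandrov_proposition}. The only cosmetic difference is that you compute $W_i(\Omega^{\cdast})$ explicitly via the radius $\rho$, whereas the paper folds that computation into a single chain of (in)equalities.
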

\begin{proof}
Since $W_{n-1}(\Om)=W_{n-1}(\Om^{\cdast}),$ by taking $j=n-1$ in \eqref{Alexandrov_Fenchel}, for $i=0,1,\dots,n-2,$ we get
\begin{align}
	\left(\frac{W_i(\Om)}{\omega_n}\right)^{\frac{1}{n-i}} &\leq \frac{W_{n-1}(\Om)}{\omega_n}\nonumber
	= \frac{W_{n-1}(\Om^\cdast)}{\omega_n}\nonumber
	=\left(\frac{W_i(\Om^{\cdast})}{\omega_n}\right)^{\frac{1}{n-i}}.\nonumber\end{align}
	Therefore
	\begin{equation*}
	 \quad W_i(\Om) \leq W_i(\Om^{\cdast}), \quad\quad \text{for}\quad 0\leq i <n-1,
\end{equation*}
and equality occurs if and only if $\Om$ is a ball. This completes the proof.
\end{proof}

Now, as a consequence of the above Proposition, we get the following result that gives a Sz. Nagy's type inequality for the outer parallel sets of a convex domain in higher dimensions.
\begin{corollary}\label{Nagy_out_extension}
 Let $\Om\subset\mathbb{R}^n$ be a bounded, convex domain and $\Om^{\cdast}$ be as defined in \eqref{Ball_Quermass}. Then
\begin{equation*}\label{Nagy_higher}
    P(\Om_{\de}) \leq P(\Om^\cdast_{\de}),\;\;\text{for every}\;\de>0.
\end{equation*}
 Furthermore, if $n\geq 3$, then the equality holds in the above inequality if and only if $\Om$ is a ball.
\end{corollary}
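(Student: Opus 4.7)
The plan is to combine the Steiner formula \eqref{Steiner_formula} with the quermassintegral comparison already obtained in Proposition \ref{Isoperimetric_quer}. Writing both perimeters via \eqref{Steiner_formula} and subtracting term by term, I would introduce the difference polynomial
\begin{equation*}
    P(\Om^\cdast_{\de}) - P(\Om_{\de}) = n \sum_{i=0}^{n-1} \binom{n-1}{i}\bigl(W_{i+1}(\Om^\cdast) - W_{i+1}(\Om)\bigr)\de^i.
\end{equation*}
The coefficients for $0\leq i\leq n-3$ are non-negative by Proposition \ref{Isoperimetric_quer}; the coefficient at $i=n-2$ vanishes because $W_{n-1}(\Om)=W_{n-1}(\Om^\cdast)$ by the defining property \eqref{Ball_Quermass}; the top coefficient at $i=n-1$ vanishes because $W_n(\Om)=W_n(\Om^\cdast)=\om_n$. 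Since $\de>0$ and the binomial coefficients are positive, each summand is non-negative, which establishes the inequality $P(\Om_\de)\leq P(\Om^\cdast_\de)$.

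For the equality statement, one direction is immediate: if $\Om$ is a ball then $\Om=\Om^\cdast$ up to translation and equality holds for every $\de$. For the converse, suppose $n\geq 3$ and that equality holds at some fixed $\de_0>0$. Then the polynomial displayed above vanishes at $\de_0$ while all its coefficients are non-negative; since $\de_0>0$, every coefficient must in fact be zero. In particular, taking the $i=0$ term yields $W_1(\Om)=W_1(\Om^\cdast)$, and the equality case of Proposition \ref{Isoperimetric_quer} then forces $\Om$ to be a ball.

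There is essentially no obstacle here: the corollary reduces to a bookkeeping argument on the Steiner polynomial once Proposition \ref{Isoperimetric_quer} is available. The only mild subtlety is recognising that vanishing of a polynomial with sign-constrained coefficients at a single positive point forces all coefficients to vanish, which is what lets us pass from equality at one $\de_0$ to the full quermassintegral equalities needed to invoke the rigidity part of Proposition \ref{Isoperimetric_quer}.
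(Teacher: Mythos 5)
Your proposal is correct and follows essentially the same route as the paper: both compare the two Steiner polynomials termwise using Proposition \ref{Isoperimetric_quer} for the coefficients $W_1,\dots,W_{n-2}$ and the defining normalization $W_{n-1}(\Om)=W_{n-1}(\Om^\cdast)$ for the remaining ones, and both derive the rigidity statement from the equality case of that proposition. Your explicit remark that vanishing of a polynomial with sign-constrained coefficients at a single $\de_0>0$ forces all coefficients to vanish makes the equality step slightly more careful than the paper's one-line assertion, but it is the same argument.
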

\begin{proof}
Using Steiner formula \eqref{Steiner_formula} and Proposition \ref{Isoperimetric_quer}, we have
	\begin{align}
		P(\Om_{\de})= n \sum_{i=0}^{n-1} \binom {n-1}i W_{i+1}(\Om)\delta^i\nonumber
		&\leq n \sum_{i=0}^{n-1} \binom {n-1}i W_{i+1}(\Om^{\cdast})\delta^i= P(\Om^\cdast_{\de}).
	\end{align}
	If $P(\Om_{\de}) = P(\Om^\cdast_{\de})$, then $W_i(\Om)=W_i(\Om^\cdast)$, for all $1\leq i\leq n-1$. Thus by Proposition \ref{Isoperimetric_quer}, $\Om$ must be a ball.
\end{proof}
\begin{remark}
Notice that, if $n=2$ and $\Om$ is convex, then by Steiner formula (see \cite{Steiner} or \cite[Theorem 10.1]{Gray}), we have $P(\Om_{\de}) = P(\Om^\cdast_{\de})$ for every $\de >0$.
\end{remark}

\section{Outer Dirichlet problem}
\label{targetOut}
In this section, we prove Theorem \ref{Theorem_Out}. Let $\Om$ be as stated in \eqref{Domain}, $\Om_D=\Omo$ and $A_O(\Om)=B_{R}\setminus \overline{B_{r}}$ be the annulus  having the same volume as $\Om$ such that $P(\Om_D)=P(B_R)$. 

Define $$t^*=\sup\{t>0: P( \Om_{D_{-t}} \cap \Om)>0\},$$ $$s(\de)=P( \Om_{D_{-\de}} \cap \Om),\;\text{for}\;\de\in (0,t^*]\;\text{and}\;S(\de)= P(B_{R_{-\de}}),\;\text{for}\;\de\in (0,R-r].$$ 
\begin{remark}\label{Nagy_rem1}
Using Proposition \ref{Nagy_in_extension}, we obtain
\begin{equation}\label{s_delta1}
    s(\delta)\leq P(\Om_{D_{-\de}})\leq P(B_{R_{-\de}}) =S(\delta),\;\text{ for all}\; \delta\in[0,R-r].
\end{equation}
In \cite[Lemma 2.3]{Anoop2020}, the above inequality follows easily from the fact that $\Om_D$ is a  ball. 
\end{remark}
Let $v(\de)$ be the volume of the portion of $\Om$ lying between $\partial \Om_{D_{-\de}}$ and $\partial\Om_D$, and  $V(\de)$ be the volume of the portion of $A_O(\Om)$ lying between $\partial B_{R_{-\de}}$ and $\partial B_{R}$. Then
\begin{equation}\label{aAdelta}
    v(\de)=\displaystyle\int_0^\de s(t)\dt,\;\text{where}\;\de\in [0,t^*],\quad V(\de)=\displaystyle\int_0^\de S(t)\dt,\;\text{where}\;\de\in [0,R-r].
\end{equation}
Observe that, both $v$ and $V$ are strictly increasing, and
\begin{align*}
    v(\delta)\le V(\delta), \forall\, \delta \in [0,R-r],\\
    v(t^*)=|\Om|=|A_O(\Om)|=V(R-r).
\end{align*}
For $\al \in [0,|\Om|]$, define $h$ and $H$ as follows:
\begin{equation*}
    h(\al)=s(v^{-1}(\al)),\quad H(\al)=S(V^{-1}(\al)).
\end{equation*}
Next, we prove an auxiliary lemma that was proved in \cite[Lemma 2.5]{Anoop2020} under the assumption that $\Om_D$ is a ball.
\begin{lemma}\label{aux2} Let $\Om$ and $A_O(\Om)$ be as mentioned in \eqref{Domain} and \ref{AO}, respectively, $\Om_D$ convex. Then the following holds:
\begin{enumerate}[(i)]
    \item $t^*\geq R-r$. Furthermore, for $n\geq3,$ equality occurs if and only if $\Om=A_O(\Om)$ (up to a translation),
    
    \item  $h(\al)\leq H(\al)$, for all $\al \in [0,|\Om|]$. Moreover, for $n\geq3,$ $h(|\Om|)=H(|\Om|)$ if and only if $\Om=A_O(\Om)$ (up to a translation). Furthermore, if $\Om$ is not a concentric annulus, then there exists $\al_0\in(0,|\Om|)$ such that $h(\al)< H(\al)$, for all $\al \in [\al_0,|\Om|]$.
\end{enumerate}
\end{lemma}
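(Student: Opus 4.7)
The plan is to chain three facts: the coarea identities $v(\delta)=\int_0^\delta s(t)\,dt$ and $V(\delta)=\int_0^\delta S(t)\,dt$; the pointwise bound $s(t)\le S(t)$ from Remark~\ref{Nagy_rem1}, which is itself a consequence of Proposition~\ref{Nagy_in_extension}; and the strict monotonicity and smoothness of $V$ on $[0,R-r]$ together with the strict decrease of $S(\delta)=n\omega_n(R-\delta)^{n-1}$.

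\emph{Part (i).} The inequality $t^*\ge R-r$ follows directly: $v(R-r)\le V(R-r)=|\Om|=v(t^*)$, and monotonicity of $v$ forces $R-r\le t^*$. For the equality case with $n\ge 3$, $t^*=R-r$ collapses the chain to $v(R-r)=V(R-r)$, whence $\int_0^{R-r}(S-s)\,dt=0$ and $s\equiv S$ a.e.\ on $[0,R-r]$. Plugged into the sandwich $s(t)\le P(\Om_{D_{-t}})\le S(t)=P((\Om_D)^{\#}_{-t})$, with $(\Om_D)^{\#}=B_R$ by the perimeter constraint $P(\Om_D)=P(B_R)$, this forces $P(\Om_{D_{-t}})=P((\Om_D)^{\#}_{-t})$ at some admissible $t$; Proposition~\ref{Nagy_in_extension}(ii) then makes $\Om_D$ a ball, which the perimeter constraint pins to $\Om_D=B_R$ up to translation. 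The recovered identity $s(t)=P(B_{R-t})$ rewrites as $\mathcal{H}^{n-1}(\partial B_{R-t}\cap\overline{\Omi})=0$ for a.e.\ $t\in[0,R-r]$; by Fubini in polar coordinates $|\overline{\Omi}\cap(B_R\setminus B_r)|=0$, and since $\Omi$ is open with $|\Omi|=|B_r|$, this upgrades to $\Omi=B_r$, i.e.\ $\Om=A_O(\Om)$.

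\emph{Part (ii).} For $\alpha\in[0,|\Om|]$, set $\delta_1:=v^{-1}(\alpha)$ and $\delta_2:=V^{-1}(\alpha)$. From $v\le V$ and $V$ strictly increasing, $V(\delta_1)\ge v(\delta_1)=\alpha=V(\delta_2)$, so $\delta_1\ge\delta_2$; strict decrease of $S$ combined with $s\le S$ then yields
\[
h(\alpha)=s(\delta_1)\le S(\delta_1)\le S(\delta_2)=H(\alpha).
\]
For the equality case at $\alpha=|\Om|$, $H(|\Om|)=S(R-r)=n\omega_n r^{n-1}$, and $s(t^*)\le S(t^*)=n\omega_n(R-t^*)^{n-1}$ forces $t^*\le R-r$; together with (i) this gives $t^*=R-r$, and (i) yields $\Om=A_O(\Om)$. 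The converse is immediate. For the strict-inequality tail when $\Om$ is not a concentric annulus, note that $h(\alpha_1)=H(\alpha_1)$ forces equality in $s(\delta_1)\le S(\delta_1)\le S(\delta_2)$, hence $\delta_1=\delta_2$ by strict decrease of $S$, whence $v(\delta_1)=V(\delta_1)$, $s\equiv S$ a.e.\ on $[0,\delta_1]$, and the Nagy-equality argument of (i) restricted to $[0,\delta_1]$ yields $\Om_D=B_R$ and $\Omi\subseteq\overline{B_{R-\delta_1}}$. As $\Om\neq A_O(\Om)$, this structural rigidity cannot hold for $\delta_1$ arbitrarily close to $R-r$, so $\alpha_0:=\sup\{\alpha:h(\alpha)=H(\alpha)\}$ lies strictly below $|\Om|$, and a supremum argument yields $h<H$ on $[\alpha_0,|\Om|]$.

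\emph{Main obstacle.} The hardest step is the equality case in (i): upgrading the a.e.\ identity $P(\Om_{D_{-t}})=P(B_{R-t})$ first to $\Om_D$ being a ball via Proposition~\ref{Nagy_in_extension}(ii), and then recovering $\Omi=B_r$ by combining the sphere-slicing vanishing $\mathcal{H}^{n-1}(\partial B_{R-t}\cap\overline{\Omi})=0$ with the volume constraint $|\Omi|=|B_r|$ and the openness of $\Omi$. The strict-inequality tail in (ii) is also delicate because $s$ need not be continuous; the existence of $\alpha_0$ must be read off from the rigidity imposed on $(\Om_D,\Omi)$ by the equality $h=H$, rather than from a naive continuity argument.
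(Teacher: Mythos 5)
Your proposal is correct in substance and, for part (i) and the first two claims of part (ii), it follows essentially the same route as the paper: the pointwise bound $s\le S$ coming from Proposition \ref{Nagy_in_extension}, the comparison of $v$ and $V$, the rigidity of the Nagy equality case to force $\Om_D=B_R$, and a Fubini-in-polar-coordinates step to pin down $\Omi$. Where you genuinely diverge is the strict-inequality tail of (ii): you characterize the equality set $\{h=H\}$ through rigidity (each equality point forces $\Om_D=B_R$ and $\Omi\subset\overline{B_{R-\de_1}}$) and exclude $\sup\{h=H\}=|\Om|$ by a limiting argument. This works, but it leans on the equality case of Proposition \ref{Nagy_in_extension} and on passing the containments to the limit. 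The paper's argument is more elementary and buys an explicit $\al_0$: since $\Om\neq A_O(\Om)$ gives $t^*>R-r$, one gets $v(R-r)<V(R-r)$, hence $v^{-1}(\al)>V^{-1}(\al)$ for all $\al\geq \al_0:=v(R-r)$, and the strict decrease of $S$ alone yields $h(\al)\le S(v^{-1}(\al))<S(V^{-1}(\al))=H(\al)$ on $[\al_0,|\Om|]$; no rigidity or limit is needed. (Note also that your $\al_0=\sup\{h=H\}$ may itself be an equality point if the supremum is attained, so you must move it slightly to the right to obtain the closed interval claimed in the statement.)

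Two smaller points to repair. First, your one-line proof of $t^*\geq R-r$ is circular as written: ``$v(R-r)\le V(R-r)$ and monotonicity of $v$'' presupposes that $R-r$ lies in the interval where $v$ is strictly increasing, whereas if $t^*<R-r$ the naturally extended $v$ is constant beyond $t^*$ and no contradiction follows from $v(R-r)\le v(t^*)$. The clean version is: if $t^*<R-r$ then $|\Om|=v(t^*)\le V(t^*)<V(R-r)=|A_O(\Om)|=|\Om|$, using that $V$ is strictly increasing on all of $[0,R-r]$. Second, in the equality case of (i), openness of $\Omi$ together with $\Omi\subset B_r$ and $|\Omi|=|B_r|$ does not by itself give $\Omi=B_r$ (consider $B_r$ with a point removed); you must invoke the Lipschitz hypothesis on $\Om$ at this step, as the paper does.
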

\begin{proof}

$(i)$ If possible, let $t^*<R-r$. Since $s(\de)\leq S(\de)$ (by \eqref{s_delta1}), we obtain
\begin{align*}
|A_O(\Om)|=\int_{0}^{R-r}S(\delta)\dd&\geq	\int_{0}^{t^*}s(\delta)\dd+\int_{t^*}^{R-r}S(\delta)\dd=|\Omega|+\int_{t^*}^{R-r}S(\delta)\dd>|\Omega|,
	\end{align*}
a contradiction to the fact that $|\Om|=|A_O(\Om)|$. Thus $t^*\geq R-r.$ If $t^*= R-r$, then $$ \displaystyle\int_{0}^{R-r}(S(\delta)-s(\delta))\dd=|A_O(\Om)|-|\Om|=0.$$
Since $s$ and $S$ are continuous, and $s(\de)\leq S(\de)$, we deduce that $s(\delta)=S(\delta),$ for all $\delta\in[0,R-r]$. Thus by \eqref{s_delta1}, we get $P(\Om_{D_{-\de}})= P(B_{R_{-\de}})$. Therefore, using Proposition \ref{Nagy_in_extension}, we conclude $\Om_D=B_{R}$ (up to a translation). Since $t^*= R-r$, by the definition of $t^*,$ we must have $P(B_{R-t}\cap \Om)=0,$ for all $t>R-r,$ i.e., $P(B_s\cap (B_R\setminus\Omi))=0,$ for all $s<r$. Thus
$$|B_r\setminus \Omi|=0.$$
Notice that, 
\begin{align*}
    |B_R\setminus B_r| &=|B_R\setminus (\Omi\cup B_r)|+ |\Omi\setminus B_r|,\\
    \text{and}\;\;|B_R\setminus \Omi| &=|B_R\setminus (\Omi\cup B_r)|+ |B_r\setminus \Omi|.
\end{align*}
Since $|B_R\setminus B_r|=|B_R\setminus \Omi|$ and $|B_r\setminus \Omi|=0,$ we get $|\Omi\setminus B_r|=0.$  Since $\Omi$ is Lipschitz (as $\Om$ is), we can prove that either $\Omi=B_r$ or $|(\Omi\setminus B_r)\cup(B_r\setminus \Omi)|>0.$ As $|(\Omi\setminus B_r)\cup(B_r\setminus \Omi)|=0,$ we conclude that $\Om=A_O(\Om)$ (up to a translation).

$(ii)$ Since  $v^{-1}(\al)\geq V^{-1}(\al)$ and $S$ is a decreasing function, we obtain $$h(\al)=s(v^{-1}(\al))\leq S(v^{-1}(\al))\leq S(V^{-1}(\al))=H(\al).$$
If $h(|\Om|)=H(|\Om|),$ then $s(t^*)=S(R-r).$  Thus we must have $t^*=R-r$. Otherwise, if $t^*>R-r$, then
$$h(|\Om|)=s(t^*)\leq S(t^*)<S(R-r)=H(|\Om|),$$
a contradiction. Therefore the equality case follows from $(i).$ Now if $\Om$ is not a concentric annulus, then $t^*>R-r$. Set $\de_0 =R-r.$ Clearly, $v(\de_0)<V(\de_0)$. If not, then $$|A_O(\Om)|=V(R-r)=v(R-r)<v(t^*)=|\Om|,$$  a contradiction. Now from the definition of $v$ and $V$, we have $$v(\de)=v(\de_0)+\displaystyle\int_{\de_0}^{\de}s(t)\d t<V(\de)\;\;\text{ for all}\; \de \in [\de_0,t^*].$$
Therefore taking $\al_0=V(\de_0)$ and using the above inequality, we obtain $v^{-1}(\al)>V^{-1}(\al)$ for all $\al\in [\al_0, |\Om|].$ Since $S$ is strictly decreasing, we get
$$h(\al)=s(v^{-1}(\al))\leq S(v^{-1}(\al))< S(V^{-1}(\al))=H(\al)\;\;\text{for all}\; \al\in [\al_0, |\Om|].$$
\end{proof}
Now we give a proof of Theorem \ref{Theorem_Out}. Our proof for the inequality part is similar to the proof of Theorem 1.1 in \cite{Anoop2020} except at a few lines. For the sake of completeness, we supply a proof.
\begin{proof}[Proof of Theorem \ref{Theorem_Out}]
For $q\in [1,p]$, let $w_q$ be a positive eigenfunction associated to\\ $\ta_{1,q}(A_O(\Om))$. Now by Proposition \ref{Radiality}, $w_q$ is a radial function in $A_O(\Om)$. Also $w_q\in C^{1,\ga}(A_O(\Om)),$ for some $0<\ga<1$ (using Proposition \ref{Existence}-$(ii)$). Therefore, we can choose $\phi_q\in C^1(\mathbb{R})$ with $\phi_q(0)=0$ such that $w_q$ can be represented as $w_q(x)=\phi_q(R-|x|)$, for all $x\in A_O(\Om)$. Set $\psi_q=\phi_q \circ V^{-1},$ where $V$ is as defined in \eqref{aAdelta}. Notice that $\psi_q(0)=0$. Let $\rho_1$ and $\rho_2$ be the distance functions from $\pa B_R$ and $\Ga_D,$ respectively, defined as follows:
$$\rho_1(x)=d(x,\pa B_R),\;  \rho_2(x)=d(x,\Ga_D)\;\;\text{ for all}\; x\in \R^n.$$
Therefore $w_q(x)=\psi_q(V(\rho_1(x)))$ for all $x\in A_O(\Om)$. Now we define a test function $u_q$ on $\Om$ in the following way:
\begin{equation*}
    u_q(x)=\psi_q(v(\rho_2(x))),\; \text{for all}\;x\in \Om,
\end{equation*}
where $v$ is given by \eqref{aAdelta}. Then $u_q\in W^{1,p}(\Om)$ and $u_q$ vanishes on $\Ga_D$. Using the fact that $|\nabla \rho_2 (x)|=1,$ (cf. \cite[Theorem 3.14]{Evans}) and by the Coarea formula (cf. \cite[Appendix C, Theorem 5]{EvansPde}), we get
\begin{align}\label{domg}
\displaystyle\int_{\Om}|\nabla u_q(x)|^p \dx&=\int_{0}^{t^*}\int_{(\pa\Om_{D_{-\de}})\cap \Om}|\nabla u_q(x)|^p\dsi \dd \no\\ &=\int_{0}^{t^*}\int_{(\pa\Om_{D_{-\de}})\cap \Om}\big\{|\psi_q'(v(\rho_2(x)))|s(\rho_2(x))\big\}^p\dsi \dd \no\\
&=\int_{0}^{t^*}|\psi_q'(v(\de))|^p s(\de)^{p+1}\dd. \no
\end{align}
Now make a change of variable $\al=v(\de)$ in the above expression to get
\begin{equation}\label{domg}
\displaystyle\int_{\Om}|\nabla u_q(x)|^p \dx =\int_{0}^{|\Om|}|\psi_q'(\al)|^p h(\al)^p \da.
\end{equation}
 In a similar manner, we obtain
\begin{equation}\label{annug}
    \displaystyle\int_{A_O(\Om)}|\nabla w_q(x)|^p \dx=\int_{0}^{|A_O(\Om)|}|\psi_q'(\al)|^p H(\al)^p \da.
\end{equation}
Therefore, Lemma \ref{aux2}-$(ii)$, along with \eqref{domg} and \eqref{annug}, yields 
\begin{equation}\label{gradq}
    \displaystyle\int_{\Om}|\nabla u_q(x)|^p \dx\leq\int_{A_O(\Om)}|\nabla w_q(x)|^p \dx.
\end{equation}
Now by performing similar computations, we get
\begin{equation}\label{Lp}
    \displaystyle\int_{\Om}|u_q(x)|^q \dx=\int_{0}^{|\Om|}|\psi_q(\al)|^q \da=\int_{0}^{|A_O(\Om)|}|\psi_q(\al)|^q \da=\int_{A_O(\Om)}|w_q(x)|^q \dx.
\end{equation}
Now using \eqref{gradq} and \eqref{Lp} in the variational characterization \eqref{variational_lambda} of $\ta_{1,q},$ we conclude that
$$\ta_{1,q}(\Om)\leq \ta_{1,q}({A_O(\Om)}).$$
If $\ta_{1,q}(\Om)=\ta_{1,q}({A_O(\Om)}),$ then we have the equality in \eqref{gradq} and hence from \eqref{domg} and \eqref{annug}, we conclude that $h(\al)=H(\al)$ for all $\al\in [0,|\Om|]$. Thus by Lemma \ref{aux2}-$(ii)$, $\Om=A_O(\Om)$ (up to a translation). This completes the proof.
\end{proof}
\section{Inner Dirichlet problem}\label{main}
In this section, we prove Theorem \ref{Theorem_In}. Let $\Omega$ be as stated in \eqref{Domain} with $\Om_D=\Omi$ and $\widetilde{A}_I(\Om)=B_{R}\setminus \overline{B_{r}}$ is the concentric annulus with the same volume as $\Om$ and $W_{n-1}(\Om_D)=W_{n-1}(B_{r})$.

Let	us define 
$$\delta_*=\sup\{\delta>0: P(\Om_{D_\de}\cap \Omega) >0 \},\;s(\delta)=P(\Om_{D_\de}\cap \Omega),\;\text{for}\;\delta\in (0,\delta_*],$$
$$ \text{and}\;S(\delta)= P(B_{r_\de}),\;\text{for}\;\de\in (0,R-r].$$
\begin{remark}\label{Nagy_rem2}
By applying Corollary \ref{Nagy_out_extension}, we have
\begin{equation}\label{s_delta2}
    s(\de)\leq P(\Om_{D_\de})\leq P(B_{r_\de})=S(\de),\;\text{for all}\;\de \in [0,R-r].
\end{equation}
However, in \cite[Section 2.2]{Anoop2020}, authors obtained the above inequality easily by assuming that $\Om_D$ is a ball.
\end{remark}
For $p\in (1,\infty)$, consider the parametrizations $t$ and $T$, similar to Hersch \cite{Hersch} for $p=2$ and \cite{Anoop2020} for $p\neq 2$, as below
	\begin{equation}\label{parame}
		t(\delta)=\displaystyle\int_{0}^{\delta}\frac{1}{s(\rho)^{p'-1}}\drho ,\quad 
		T(\delta)=\int_{0}^{\delta}\frac{1}{S(\rho)^{p'-1}}\drho,
	\end{equation}
	where $p'=\frac{p}{p-1}$ is the holder conjugate of $p$. Let $t_*=t(\delta_*)$ and $T_{\#}=T(R-r)$. Define $$g(\alpha)=s(t^{-1}(\alpha)),\;\text{for}\;\alpha \in [0,t_*]\;\text{and}\; G(\alpha)=S(T^{-1}(\alpha)),\;\text{for}\;\alpha \in [0,T_{\#}].$$
	
	Next, we state a technical lemma without proof that will be used to prove the main results. This lemma was proved in \cite[Lemma 2.7]{Anoop2020} for the case when $\Om_D$ is a ball. The similar proof can be carried out for $\Om_D$ convex also with the help of \eqref{s_delta2}.
	\begin{lemma}\label{auxi} Let $\Omega $ and $\widetilde{A}_I(\Om)$ be as stated in \eqref{Domain} and \eqref{Dom23}, respectively, with $\Om_D$ convex. Then we have the following:
		\begin{enumerate}[(i)]
			\item  $R-r\leq \delta_*$ and $T_{\#}\leq t_*$. Furthermore, for $n\geq3,$ $R-r=\de_*$ or $T_{\#}= t_*$ if and only if $\Omega=\widetilde{A}_I(\Om)$ (up to a translation).
		\item  $g(\alpha)\leq G(\alpha)$, for all $\alpha \in [0,T_\#]$ and for $n\geq3,$ equality happens if and only if $\Omega=\widetilde{A}_I(\Om)$ (up to a translation). Furthermore, if $\Omega$ is not a concentric annulus, then $g(\alpha)<G(\alpha)$ on $(\alpha',T_\#]$, for some $\alpha'\in[0,T_\#]$.
			\item $\displaystyle\int_{0}^{t_*}g(\alpha)^{p'}\da=\int_{0}^{T_\#}G(\alpha)^{p'}\da=|\Omega|.$
		\end{enumerate}
	\end{lemma}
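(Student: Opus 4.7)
The plan is to leverage Corollary \ref{Nagy_out_extension}, which through Remark \ref{Nagy_rem2} provides the pointwise bound $s(\de)\leq S(\de)$ for every $\de\in [0,R-r]$. The three assertions then follow by combining this inequality with coarea-type identities, monotonicity of the parametrizations $t$ and $T$, and the strict monotonicity of $S$ (which is increasing since $S(\de)=n\om_n(r+\de)^{n-1}$).

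For part (i), I would first establish $R-r\leq \de_*$ by a volume contradiction. The coarea formula yields $|\Om|=\int_0^{\de_*} s(\de)\dd$ and $|\widetilde{A}_I(\Om)|=\int_0^{R-r} S(\de)\dd$; if $\de_*<R-r$, then $s\leq S$ together with $S>0$ would force $|\Om|<|\widetilde{A}_I(\Om)|$, contradicting the equality of volumes. Since $\p-1=\tfrac{1}{p-1}>0$, the inequality $s\leq S$ also gives $s^{-(\p-1)}\geq S^{-(\p-1)}$, whence $t(\de)\geq T(\de)$ on $[0,R-r]$; consequently $T_\#=T(R-r)\leq t(R-r)\leq t(\de_*)=t_*$ by monotonicity of $t$. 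For the rigidity when $n\geq 3$, equality in either $R-r=\de_*$ or $T_\#=t_*$ must propagate to $s(\de)=S(\de)$ on the whole interval $[0,R-r]$, as seen by tracing back the above estimates; this forces $P(\Om_{D_\de})=P(B_{r_\de})$ for every such $\de$, and Corollary \ref{Nagy_out_extension} then implies $\Om_D$ is a ball, necessarily a translate of $B_r$. Arguing as in the final paragraph of the proof of Lemma \ref{aux2}-(i), the Lipschitz regularity of $\pa\Omo$ together with the volume constraint forces $\Omo$ to be the concentric ball $B_R$ up to translation.

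For part (ii), from $t\geq T$ we get $t^{-1}(\al)\leq T^{-1}(\al)$ on $[0,T_\#]$; since $S$ is strictly increasing on $[0,R-r]$,
\[
g(\al)=s(t^{-1}(\al))\leq S(t^{-1}(\al))\leq S(T^{-1}(\al))=G(\al).
\]
Equality throughout collapses to $s(t^{-1}(\al))=S(t^{-1}(\al))$ together with $t^{-1}(\al)=T^{-1}(\al)$, which combined with part (i) yields the rigidity statement. If $\Om$ is not a concentric annulus, then $s(\de_0)<S(\de_0)$ on an open subinterval of $[0,R-r]$, so $t(\de)>T(\de)$ strictly for $\de$ beyond that subinterval; this translates into $t^{-1}(\al)<T^{-1}(\al)$ on some right-neighbourhood $(\al',T_\#]$ of $T_\#$, and the strict monotonicity of $S$ there yields $g(\al)<G(\al)$. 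Part (iii) is a direct change of variable: setting $\al=t(\de)$, so that $\da=s(\de)^{-(\p-1)}\dd$, one computes
\[
\int_0^{t_*} g(\al)^{\p}\da=\int_0^{\de_*} s(\de)^{\p}\cdot s(\de)^{-(\p-1)}\dd=\int_0^{\de_*} s(\de)\dd=|\Om|,
\]
and the analogous computation with $T$ replacing $t$ yields the second identity.

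The principal obstacle lies in the rigidity of part (i): propagating the equality in the integrated quantities back to the geometric statement $\Om=\widetilde{A}_I(\Om)$. The step that forces $\Om_D$ to be a ball via Corollary \ref{Nagy_out_extension} is clean, but upgrading this to conclude that $\Omo$ must also be the concentric ball $B_R$ requires a careful measure-theoretic argument exploiting the Lipschitz character of $\pa\Omo$ and the sharp volume constraint, mirroring the final step of the proof of Lemma \ref{aux2}-(i).
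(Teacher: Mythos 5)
Your proof is correct and is essentially the argument the paper intends: the paper states Lemma \ref{auxi} without proof, citing \cite[Lemma 2.7]{Anoop2020} for the ball case and noting that the convex case follows verbatim once the pointwise bound $s(\de)\leq S(\de)$ of \eqref{s_delta2} (via Corollary \ref{Nagy_out_extension}) is available, which is exactly the substitution you carry out. Your handling of the rigidity in part (i) — propagating equality back to $s\equiv S$, invoking the equality case of Corollary \ref{Nagy_out_extension} to make $\Om_D$ a ball, and then using the Lipschitz/volume argument from Lemma \ref{aux2}-(i) to pin down $\Omo$ — matches the paper's intended route.
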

		
		
 Now we give a proof of Theorem \ref{Theorem_In}. We use the ideas of the proof of \cite[Theorem 1.2]{Anoop2020}. For the sake of completeness, we prove it here.
\begin{proof}[Proof of Theorem \ref{Theorem_In}]
 		Let $q\in [1,p]$ and $v_q$ be a positive eigenfunction corresonding to $\ta_{1,q}(\widetilde{A}_I(\Om))$. Then $v_q$ is radially symmetric and radially increasing in the concentric annular region $\widetilde{A}_I(\Om)$ (by Proposition \ref{Radiality}). Let $\rho_1$ and $\rho_2$ be the distance functions from $\pa B_r$ and $\Ga_D,$ respectively, defined as below
 		$$\rho_1(x)=d(x,\pa B_r),\;\; \rho_2(x)=d(x,\Ga_D)\;\;\text{ for all}\; x\in \R^n.$$
 		Now we can choose a $C^1$ function $\phi_q$ on $\mathbb{R}$ such that $\phi_q(0)=0$ and $v_q$ can be expressed as follows:
 		\begin{equation*}
 		v_q(x)=\phi_q(|x|-r)=\phi_q(\rho_1(x))=(\phi_q\circ T^{-1})(T(\rho_1(x)))=\psi_q(T(\rho_1(x))),
 		\end{equation*}
 		where $T$ is as stated in \eqref{parame} and $\psi_q:=\phi_q\circ T^{-1}$. Since $v_q$ is radially increasing in $\widetilde{A}_I(\Om)$, the maximum of $v_q$ will be on the outer boundary $\partial B_{R}$. Therefore, $\psi_q(T_\#)=\max\limits_{[0,T\#]}\psi_q $. 	Now we define a test function on $\Omega$ in the following way. Define
 		\begin{equation*}
 		u_q(x)= \begin{cases}
 		 \psi_q(t(\rho_2(x))),& \mbox{if } \quad t(\rho_2(x)) \in [0,T_\#], \\
 		\psi_q(T_\#), & \mbox{if} \quad t(\rho_2(x)) \in (T_\#,t_*], 
 		\end{cases}
 		\end{equation*}
 		where $t$ is as defined in \eqref{parame}. Observe that $u_q\in W^{1,p}(\Omega)$ and $u_q$ vanishes on $ \Ga_D$. Also $\nabla u_q(x)=0$, if $t(\rho_2(x)) \in (T_\#,t_*]$.
 		Therefore, using the fact that $|\nabla \rho_2(x)|=1, \;\forall\;x\in \Om$ and by the Coarea formula, we get
 		\begin{align*}
 			\int_{\Omega}|\nabla u_q(x)|^p\dx &=\int_{0}^{t^{-1}(T_\#)}\int_{(\pa\Om_{D_{\de}})\cap \Om}|\nabla u_q(t(\rho_2(x)))|^p\dsi \dd\\
 			&=\int_{0}^{t^{-1}(T_\#)}\frac{|\psi_q'(t(\delta))|^p}{s(\delta)^{p'-1}}\dd\\
 			&=\int_{0}^{T_\#}|\psi_q'(\alpha)|^p\da,\;\;(\text{by a change of variable}\; t(\delta)=\alpha).\;
 		\end{align*}
 		Hence 
 		\begin{equation*}
 		    \int_{\Omega}|\nabla u_q(x)|^p\dx=\int_{0}^{T_\#}|\psi_q'(\alpha)|^p\da.
 		\end{equation*}
 		Similary, we have $\displaystyle\int_{\widetilde{A}_I(\Om)}|\nabla v_q(x)|^p \dx=\int_{0}^{T_\#}|\psi_q'(\alpha)|^p\da$. Thus,
 		\begin{equation}\label{e2}
 		    \int_{\Omega}|\nabla u_q(x)|^p\dx=\displaystyle\int_{\widetilde{A}_I(\Om)}|\nabla v_q(x)|^p \dx.
 		\end{equation}
 		Also using the definition of $u_q$, we obtain
 		\begin{align*}
 			\displaystyle\int_{\Omega}|u_q(x)|^q \dx&=\int_{0}^{t_*}|\psi_q(\alpha)|^q g(\alpha)^{p'}\da\\
 			&=\int_{0}^{T_\#}|\psi_q(\alpha)|^q g(\alpha)^{p'}\da+ |\psi_q(T_\#)|^q\int_{T_\#}^{t_*}g(\alpha)^{p'}\da
 		\end{align*}
 		and 
 		\begin{align*}
 			\displaystyle\int_{\widetilde{A}_I(\Om)}|v_q(x)|^q \dx&=\int_{0}^{T_\#}|\psi_q(\alpha)|^q G(\alpha)^{p'}\da.
 		\end{align*}
 		Therefore, by Lemma \ref{auxi}-$(ii)$ and using the fact that $\psi(T_\#)\geq \psi(\alpha), \;\forall\;\al\in[0,T_\#]$, we have
 		\begin{align*}
 			\displaystyle\int_{\widetilde{A}_I(\Om)}|v_q(x)|^q \dx-\displaystyle\int_{\Omega}|u_q(x)|^q \dx &\leq |\psi_q(T_\#)|^q\Bigg\{\int_{0}^{T_\#}(G(\alpha)^{p'}-g(\alpha)^{p'})\da-\int_{T_\#}^{t_*} g(\alpha)^{p'}\da\Bigg\}\nonumber\\
 			&\leq 0.
 			\end{align*}
 			Thus
 			\begin{equation}\label{e3}\displaystyle\int_{\widetilde{A}_I(\Om)}|v_q(x)|^q \dx\leq \displaystyle\int_{\Omega}|u_q(x)|^q \dx.
 		\end{equation}
 		Hence the conclusion follows using \eqref{e2} and \eqref{e3} in variational characterization \eqref{variational_lambda} of $\ta_{1,q}$. The equality case follows using similar arguments as in the proof of Theorem \ref{Theorem_Out}. This concludes the proof.
 		\end{proof}

\section{Some remarks and open problems}
\label{remark}

\noi\textbf{Torsional rigidity:}
The arguments used in this article can be adapted to study the torsion problem on multiply-connected domains with mixed boundary conditions. Let $\Om$, $A_O(\Om)$ and $\widetilde{A}_I(\Om)$ be as stated in \eqref{Domain}, \ref{AO}, and \eqref{Dom23}, respectively. For $p\in (1,\infty)$, consider the following problem:
\begin{equation}\tag{T}\label{Torsion_problem}
	\left. \begin{aligned}
	-\Delta_p u &=1 \qquad\text{in} \quad\Om,\\
	u &=0 \qquad\text{on}\quad \Ga_D,\\
	\frac{\partial u}{\partial \eta} &=0 \qquad\text{on} \quad \pa \Om\setminus\Ga_D.
	\end{aligned}\right\}
\end{equation}
Now the variational characterization of $p$-torsional rigidity $T(\Om)$ of \eqref{Torsion_problem} is given by
\begin{equation}
    \frac{1}{T(\Om)}=\displaystyle\inf\Bigg\{\frac{\int_{\Omega}|\nabla u|^p \dx}{\big(\int_{\Omega}|u| \dx\big)^p}: u\in W^{1,p}_{\Ga_D}(\Omega)\setminus\{0\} \Bigg\}.
\end{equation}
Then using a similar test function as in the proof of Theorem \ref{Theorem_Out} and Theorem \ref{Theorem_In} respectively, we can establish the following results.
\begin{theorem}\label{Torsion_Out}
Let $\Om$ and $A_O(\Om)$ be as in Theorem \ref{Theorem_Out}. Assume that $\Om_D=\normalfont{\Omo}$ and $\Om_D$ is convex. Then $T(A_O(\Om))\leq T(\Om)$ and for $n\geq3,$ equality holds if and only if $\Om=A_O(\Om)$ (up to a translation). 
\end{theorem}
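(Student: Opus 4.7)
The plan is to recognize that the variational characterization of $1/T(\Om)$ coincides with $\mathcal{R}_q$ at $q=1$: both infima are taken over $W^{1,p}_{\Ga_D}(\Om)\setminus\{0\}$ of the ratio $\int_\Om |\nabla u|^p\,\dx / (\int_\Om |u|\,\dx)^p$. Hence $1/T(\Om) = \ta_{1,1}(\Om)$, and since $1\in[1,p]$, Theorem \ref{Theorem_Out} applied with $q=1$ immediately yields $1/T(\Om) \le 1/T(A_O(\Om))$, i.e.\ $T(A_O(\Om)) \le T(\Om)$, together with the equality characterization. This gives a one-line proof, but the preamble to the theorem indicates that the authors want a self-contained argument via a transplanted test function, so I would repeat the construction of Section \ref{targetOut} with $q=1$.

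To that end, I would start from the positive torsion function $w$ on $A_O(\Om)$, equivalently the positive minimizer (up to scaling) of $\mathcal{R}_1$ on the annulus. Uniqueness of the torsion function and the symmetry of the annulus force $w$ to be radial, and the ODE argument from Proposition \ref{Radiality} shows that $w$ is strictly decreasing in $|x|$ when $\Ga_D=\pa B_R$, since the radial profile $f$ satisfies $-(|f'|^{p-2}f' r^{n-1})'=r^{n-1}$ together with $f'(r)=0$ and $f(R)=0$. Writing $w(x)=\psi(V(\rho_1(x)))$ with $\rho_1(x)=d(x,\pa B_R)$ and $V$ as in \eqref{aAdelta}, I would then transplant to $\Om$ via $u(x)=\psi(v(\rho_2(x)))$, $\rho_2(x)=d(x,\Ga_D)$, which lies in $W^{1,p}_{\Ga_D}(\Om)$.

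The Coarea-formula calculation performed in the proof of Theorem \ref{Theorem_Out} goes through verbatim and converts the Dirichlet energies into one-dimensional integrals with weights $h(\al)^p$ on $\Om$ and $H(\al)^p$ on $A_O(\Om)$; the $L^1$ norms simplify to $\int_0^{|\Om|}\psi(\al)\,\da$ and $\int_0^{|A_O(\Om)|}\psi(\al)\,\da$, which agree since $|\Om|=|A_O(\Om)|$. Lemma \ref{aux2}(ii) gives $h\le H$ pointwise on $[0,|\Om|]$, so $\int_\Om |\nabla u|^p\,\dx \le \int_{A_O(\Om)} |\nabla w|^p\,\dx$, and plugging $u$ into the variational characterization of $1/T(\Om)$ yields the inequality.

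The equality case is handled as in the proof of Theorem \ref{Theorem_Out}: $T(\Om)=T(A_O(\Om))$ forces $h=H$ a.e.\ on $[0,|\Om|]$, since $\psi'$ is non-degenerate a.e.\ because $w$ is strictly radially monotone, and then the rigidity clause of Lemma \ref{aux2}(ii) yields $\Om=A_O(\Om)$ up to translation. No genuinely new obstacle appears, since the heavy lifting, namely Proposition \ref{Nagy_in_extension} and Lemma \ref{aux2}, is already in place; the only additional point to verify is that the monotonicity argument of Proposition \ref{Radiality} continues to apply to the torsion function, which it does because its radial profile satisfies an ODE with the same monotonicity structure as in the eigenvalue case.
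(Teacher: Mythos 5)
Your proposal is correct, and its first observation is actually sharper than what the paper does. The paper's ``proof'' of this theorem consists of the remark that one can repeat the test-function construction of Section \ref{targetOut}; your secondary, detailed argument is exactly that construction specialized to the torsion setting, and it goes through as you describe (including the radial monotonicity of the torsion profile from the ODE $-(|f'|^{p-2}f'r^{n-1})'=r^{n-1}$ with the mixed boundary conditions, and the nonvanishing of $\psi'$ needed to extract $h=H$ in the equality case). But your opening reduction is cleaner: since $\mathcal{R}_1(u)=\int_\Om|\nabla u|^p\dx\big/\big(\int_\Om|u|\dx\big)^p$ coincides with the functional defining $1/T(\Om)$ over the same admissible class $W^{1,p}_{\Ga_D}(\Om)\setminus\{0\}$, one has the identity $1/T(\Om)=\ta_{1,1}(\Om)$, and Theorem \ref{Theorem_Out} at $q=1$ (which is permitted, as $1\in[1,p]$) immediately yields $T(A_O(\Om))\leq T(\Om)$ together with the rigidity statement for $n\geq 3$. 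This makes Theorem \ref{Torsion_Out} a literal corollary of Theorem \ref{Theorem_Out} rather than a parallel result requiring a separate (if routine) argument; the only thing the longer route buys is independence from the eigenvalue theorem, which is not needed here. The same reduction applies verbatim to Theorem \ref{Torsion_In} via Theorem \ref{Theorem_In}.
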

\begin{theorem}\label{Torsion_In}
Let $\Om$ and $\widetilde{A}_I(\Om)$ be as in Theorem \ref{Theorem_In}. Assume that $\Om_D=\normalfont{\Omi}$ and $\Om_D$ is convex. Then $T(\widetilde{A}_I(\Om))\leq T(\Om)$ and for $n\geq3,$ equality holds if and only if $\Om=\widetilde{A}_I(\Om)$ (up to a translation). 
\end{theorem}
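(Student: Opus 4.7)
The plan is to observe that Theorem~\ref{Torsion_In} is just the case $q=1$ of Theorem~\ref{Theorem_In}, and then deduce it as a corollary. Comparing the Rayleigh quotient $\mathcal{R}_q$ in \eqref{variational_lambda} with the torsion quotient displayed just above Theorem~\ref{Torsion_In}, the two coincide when $q=1$, so their variational characterizations give $1/T(\Omega) = \tau_{1,1}(\Omega)$. (A positive minimizer $u$ of $\mathcal{R}_1$ satisfies $-\Delta_p u = \tau_{1,1}(\Omega)\bigl(\int_{\Omega} u\,dx\bigr)^{p-1}$, which is a scalar multiple of \eqref{Torsion_problem}, so the minimizer and the torsion function differ by a positive constant.) Since $1\in[1,p]$, applying Theorem~\ref{Theorem_In} with $q=1$ immediately yields $\tau_{1,1}(\Omega) \le \tau_{1,1}(\widetilde{A}_I(\Omega))$, equivalently $T(\widetilde{A}_I(\Omega)) \le T(\Omega)$, and the equality clause transports verbatim.

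If I instead followed the author's hint of a direct proof, the argument would mirror Section~\ref{main} with only cosmetic changes. I would first let $v$ be the positive solution of \eqref{Torsion_problem} on $\widetilde{A}_I(\Omega)$; the ODE argument behind Proposition~\ref{Radiality}-(ii), applied to $-(r^{n-1}|f'|^{p-2}f')' = r^{n-1}$ with $f(r)=0$ and $f'(R)=0$, would show that $v$ is radial and strictly radially increasing. Writing $v(x) = \psi(T(\rho_1(x)))$ with $\psi := \phi\circ T^{-1}$ and $\phi(0)=0$, I would introduce exactly the test function
\[
u(x) = \begin{cases} \psi(t(\rho_2(x))), & t(\rho_2(x))\in [0, T_\#], \\ \psi(T_\#), & t(\rho_2(x)) \in (T_\#, t_*], \end{cases}
\]
from Section~\ref{main}. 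The coarea identity from that proof would give $\int_\Omega |\nabla u|^p\,dx = \int_{\widetilde{A}_I(\Omega)} |\nabla v|^p\,dx$, while Lemma~\ref{auxi} together with the monotonicity of $\psi$ would produce $\int_\Omega u\,dx \ge \int_{\widetilde{A}_I(\Omega)} v\,dx$. Plugging both into the variational characterization of $1/T(\Omega)$ would then close the inequality.

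The only step requiring care is the equality clause, which would be handled exactly as at the end of the proof of Theorem~\ref{Theorem_In}: equality forces $g(\alpha) = G(\alpha)$ on $[0,T_\#]$ and $\int_{T_\#}^{t_*} g(\alpha)^{p'}\,d\alpha = 0$, from which Lemma~\ref{auxi}-(ii) yields $\Omega = \widetilde{A}_I(\Omega)$ up to a translation. I do not anticipate any genuine obstacle here: Corollary~\ref{Nagy_out_extension} (Sz.~Nagy for outer parallels of convex bodies), Lemma~\ref{auxi}, and the radial monotonicity for the torsion ODE are all already in hand, and the substance of the argument lies entirely in the identification $\tau_{1,1} = 1/T$ together with the routine transport of the template proof of Theorem~\ref{Theorem_In}.
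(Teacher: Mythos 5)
Your proposal is correct, and your primary route is a genuinely cleaner deduction than what the paper sketches. The paper does not write out a proof of Theorem~\ref{Torsion_In}; it only remarks that one should rerun the test-function construction of Section~\ref{main} with the torsion function in place of the eigenfunction. Your observation that the quotient defining $1/T(\Omega)$ is \emph{literally} $\mathcal{R}_1$, so that $1/T(\Omega)=\tau_{1,1}(\Omega)$ and the theorem is the $q=1$ instance of Theorem~\ref{Theorem_In} (equality clause included), short-circuits that entirely: since $1\in[1,p]$, nothing new needs to be verified. What the reduction buys is economy and the automatic transport of the uniqueness statement; what the paper's direct route buys is nothing extra here, though it is the template one would need if the torsion quotient did not happen to coincide with an admissible $\mathcal{R}_q$. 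Your backup direct argument matches the paper's intent and is sound: the ODE $-(r^{n-1}|f'|^{p-2}f')'=r^{n-1}$ with $f(r)=0$, $f'(R)=0$ does give strict radial monotonicity, and the coarea/Lemma~\ref{auxi} computations go through verbatim with $q=1$. Two cosmetic points: the minimizer of $\mathcal{R}_1$ and the torsion function differ by a positive \emph{multiplicative} constant (a rescaling), not an additive one; and in the equality analysis the bracket $\int_0^{T_\#}(G^{p'}-g^{p'})\,{\rm d}\alpha-\int_{T_\#}^{t_*}g^{p'}\,{\rm d}\alpha$ is identically zero by Lemma~\ref{auxi}-(iii), so the binding inequality is the one using the strict monotonicity of $\psi$, which then forces $g=G$ a.e.\ on $[0,T_\#]$ exactly as you indicate.
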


\noi\textbf{Open problems:} Now we state a few open problems related to Sz. Nagy's inequalities and the reverse Faber-Krahn inequalities discussed here. 
\begin{enumerate}[(i)]

\item \textit{Sz. Nagy's inequality for non-convex domains}: We have extended Sz.  Nagy's inequalities \eqref{Nagy_in} and \eqref{Nagy_out} to higher dimensions (Proposition \ref{Nagy_in_extension} and Corollary \ref{Nagy_out_extension}, respectively) for convex domains. The analogue of these results for the non-convex domains in higher dimensions are not known. 

    \item
 Let $p^*=\frac{np}{n-p}$ and $q\in (p,p^*)$. By the compactness of the Sobolev embedding  $W^{1,p}_{\Ga_D}(\Om)\hookrightarrow L^q(\Om),$ $\ta_{1,q}(\Om)$ is attained for some $u\in W^{1,p}_{\Ga_D}(\Om)$. However, as we pointed out in Remark \ref{Non_rad}, $u$ need not be radial on an arbitrary concentric annular region. Therefore, our method of proof is not applicable for $q\in (p,p^*)$ and the analogue of Theorem \ref{Theorem_Out} and \ref{Theorem_In} seems to be a challenging open problem in these cases.

    
    \item We give an analogue of Hersch's result \eqref{Hersch_rfk} in higher dimensions using the  Sz. Nagy’s type  inequality (Corollary \ref{Nagy_out_extension}) for
outer parallel sets with quermassintegral constraint. The extension of \eqref{Hersch_rfk} to higher dimensions with respect to the perimeter or any other quermassintegral constraint on the Dirichlet boundary is entirely open. 

\item Though Sz. Nagy's inequality fails (see Theorem \ref{nag}) with the perimeter constraints, the reverse Faber-Krahn inequality \eqref{Hersch_rfk} for the inner Dirichlet problem still holds for a certain convex domain in higher dimensions. We provide a numerical example using COMSOLE MULTIPHYSICS (Version 4.3).

Let \begin{align*}
    \Omo &=\{(x,y,z)\in \R^3: |x|<0.5, |y|<0.75, |z|<1\},\\
    \Omi &=\{(x,y,z)\in \R^3: |x|<0.4, |y|<0.65, |z|<0.9\}.
\end{align*}
Suppose $\Om=\Omo\setminus\overline{\Omi}$ and $\Om_D=\Omi.$ Let $A_I(\Om)=B_R\setminus\overline{B_r}$ be the concentric annular region such that $|A_I(\Om)|= |\Om|$ and $P(B_r)= P(\Om_D).$ Then the following reverse Faber-Krahn inequality holds (approximately)
$$\ta_{1,2}(\Om)\approx 0.23429< 0.87586\approx\ta_{1,2}(A_I(\Om)).$$
However, Sz. Nagy's inequality for the outer parallel sets of $\Om_D$ fails with the perimeter constraint (see Theorem \ref{nag}). Thus a different approach for proving the
reverse Faber-Krahn inequality that applies to this kind of domain can
be explored.

\end{enumerate}

\bibliographystyle{abbrvurl}
\bibliography{refconvex}

\end{document}